\providecommand{\abs}[1]{\left\vert #1 \right\vert}
\providecommand{\norm}[1]{\left\Vert #1 \right\Vert}
\providecommand{\pt}[1]{\left( #1 \right)}
\providecommand{\spt}[1]{\left[ #1 \right]}
\newcommand{\RR}{\mathbb R}
\newcommand{\NN}{\mathbb N}
\providecommand{\A}{\mathcal A}
\providecommand{\B}{\mathcal B}
\newcommand{\D}{\,\mathrm d}
\newcommand{\dx}{{\D x}}
\newcommand{\di}{\mathrm{div}}
\newcommand{\ve}{\varepsilon}
\newcommand{\supp}{\mathrm{supp}\,}
\newcommand{\sublim}{\operatornamewithlimits{\longrightarrow}}
\newtheorem{theorem}{Theorem}[section]
\newtheorem{lemma}{Lemma}[section]
\newtheorem{remark}{Remark}[section]
\Crefname{corollary}{Corollary}{Corollaries}
\Crefname{lemma}{Lemma}{Lemmas}
\Crefname{theorem}{Theorem}{Theorems}
\Crefname{proposition}{Proposition}{Propositions}
\newcommand\myurl[1]{\url{#1}}
\begin{document}
	
	\title{Interior regularity of doubly weighted quasi-linear equations}
	\author{Hern{\'a}n Castro}
	\email{hcastro@utalca.cl}
	\address{Instituto de Matem{\'a}ticas, Universidad de Talca, Casilla 747, Talca, Chile}
	\date{\today}
	\subjclass[2020]{35B45, 35B65, 35J62}
	
	\begin{abstract}
		In this article we study the quasi-linear equation
		\[
		\left\{
		\begin{aligned}
			\di \A(x,u,\nabla u)&=\B(x,u,\nabla u)&&\text{in }\Omega,\\
			u\in H^{1,p}_{loc}&(\Omega;w\dx)
		\end{aligned}
		\right.
		\]
		where \(\A\) and \(\B\) are functions satisfying \(\A(x,u,\nabla u)\sim w_1(\abs{\nabla u}^{p-2}\nabla u+\abs{u}^{p-2}u)\) and \(\B(x,u,\nabla u)\sim w_2(\abs{\nabla u}^{p-2}\nabla u+\abs{u}^{p-2}u)\) for \(p>1\), a \(p\)-admissible weight function \(w_1\), and another weight function \(w_2\) compatible with \(w_1\) in a suitable sense. We establish interior regularity results of weak solutions and use those results to obtain point-wise asymptotic estimates at infinity for solutions to
		\[
		\left\{
		\begin{aligned}
			-\di(w_1\abs{\nabla u}^{p-2}\nabla u)&=w_2\abs{u}^{q-2}u&&\text{in }\Omega,\\
			u\in D^{1,p,w_1}&(\Omega)
		\end{aligned}
		\right.
		\]
		for a critical exponent \(q>p>1\) in the sense of Sobolev.
	\end{abstract}
	
	\maketitle

	\section{Introduction}
	
	This article is a direct continuation of \cite{Cas2023} where we studied qualitative and quantitative properties of weak solutions to the following equation
	\begin{equation}\label{bb-eq}
		\left\{
		\begin{aligned}
			-\di \pt{w_1\abs{\nabla u}^{p-2}\nabla u}&=w_2\abs{u}^{q-2}u&&\text{in }\Omega\\
			u&\in D^{1,p,w_1}(\Omega),
		\end{aligned}
		\right.
	\end{equation}
	for equal weights \(w_1=w_2\) and \(q>p>1\) critical for the weighted Sobolev embedding from \(D^{1,p,w_1}(\Omega)\) into \(L^{q,w_2}(\Omega)\). In this continuation we generalize the results obtained in \cite{Cas2023} for the case of different weights \(w_1\neq w_2\) but satisfying suitable compatibility conditions.
	
	The main motivation behind studying this problem comes from the results in \cite{Cas2021} where the existence to extremals to a Sobolev inequality with monomial weights was analyzed (see also \cites{CR-O2013-2,Cas2016-2}). It is known that extremals to a weighted Sobolev inequality can be viewed as positive solutions to \eqref{bb-eq} for appropriate weights \(w_1,w_2\), and our goal is to obtain as much information as possible regarding said extremals and, in general, of solutions to \eqref{bb-eq}.
	
	As in \cite{Cas2023} the functions \(w_1,w_2\) will be weight functions, meaning locally Lebesgue integrable non-negative function over \(\Omega\subseteq\RR^N\) satisfying at least the following two conditions: if we abuse the notation and we also write \(w\) as the measure induced by \(w\), that is \(w(B)=\int_B w\dx\), we require that \(w\) is a doubling measure in \(\Omega\), meaning that there exists a \emph{doubling constant} \(\gamma>0\) such that
	\begin{equation}\label{doubling-w}
		w(2B)\leq \gamma w(B)
	\end{equation}
	holds for every (open) ball such that \(2B\subset \Omega\), where \(\rho B\) denotes the ball with the same center as \(B\) but with its radius multiplied by \(\rho>0\). The smallest possible \(\gamma>0\) for which \eqref{doubling-w} holds for every ball will be denoted by \(\gamma_w>0\) from now on. Additionally we will suppose that
	\begin{equation}\label{inv-loc-int2}
		0<w<\infty\qquad\lambda-\text{almost everywhere}
	\end{equation}
	where \(\lambda\) denotes the \(N\)-dimensional Lebesgue measure. Observe that these two conditions ensure that the measure \(w\) and the Lebesgue measure \(\lambda\) are absolutely continuous with respect to each other.
	
	In addition to \eqref{doubling-w} and \eqref{inv-loc-int2} we will suppose that the weight \(w_1\) satisfies the following local \((1,p)\) Poincaré inequality: if we write \(\fint_B f w\dx =\frac{1}{w(B)}\int fw\dx\) then
	\begin{enumerate}[label={(P{\scriptsize\Roman*})}]
		\item  \emph{Local weighted \((1,p)\)-Poincaré inequality}: There exists \(\rho\geq 1\) such that if \(u\in C^1(\Omega)\) then for all balls \(B\subset\Omega\) of radius \(l(B)\) one has
		\begin{equation}\label{weighted-poincare}
			\fint_B\abs{u-u_{B,w_1}}w_1\dx\leq C_1l(B)\pt{\fint_{\rho B}\abs{\nabla u}^pw_1\dx}^{\frac1p}
		\end{equation}
		where  
		\begin{equation*}
			u_{B,w}=\fint_B uw\D x
		\end{equation*}
		is the weighted average of \(u\) over \(B\).
	\end{enumerate}
	
	As it can be seen in \cite[Chapter 20]{HeKiMa2006}, when a weight function \(w\) satisfies \eqref{doubling-w}, \eqref{inv-loc-int2} and \eqref{weighted-poincare} then \(w\) is \emph{\(p\)-admissible}, that is, it also satisfies the following properties
	\begin{enumerate}[resume*]
		\item
		\emph{Uniqueness of the gradient}: If \((u_n)_{n\in\NN}\subseteq C^1(\Omega)\) satisfy 
		\[
		\int_\Omega\abs{u_n}^pw_1\dx\sublim\limits_{n\to\infty} 0\quad\text{and}\quad\int_{\Omega}\abs{\nabla u_n-v}^pw_1\dx\sublim\limits_{n\to\infty} 0
		\]
		for some \(v:\Omega\to \RR^N\), then \(v=0\).
		
		\item 
		\emph{Local Poincaré-Sobolev inequality}: There exist constants \(C_3>0\) and \(\chi_1>1\) such that for all balls \(B\subset\Omega\) one has
		\begin{equation}\label{poin-ineq_w1}
			\pt{\fint_B\abs{u-u_{B,w_1}}^{\chi_1 p}w_1\dx}^{\frac1{\chi_1 p}}\leq C_2l(B)\pt{\fint_B \abs{\nabla u}^pw\dx}^{\frac 1p}
		\end{equation}
		for bounded \(u\in C^1(B)\).
		
		\item 
		\emph{Local Sobolev inequality}: There exist constants \(C_2>0\) and \(\chi_1>1\) (same as above) such that for all balls \(B\subset\Omega\) one has
		\begin{equation}\label{sob-ineq_w1}
			\pt{\fint_B\abs{u}^{\chi_1 p}w_1\dx}^{\frac1{\chi_1 p}}\leq C_2l(B)\pt{\fint_B \abs{\nabla u}^pw_1\dx}^{\frac 1p}
		\end{equation}
		for \(u\in C^1_c(B)\).
		
	\end{enumerate}
	
	\begin{remark}
		
		As we mentioned in \cite{Cas2023} the value of \(\chi_1\) comes from a dimensional constant associated to the weight, namely, it can be seen that if \(w\) is a doubling weight then
		\begin{equation}\label{w-ball-estimate1}
			\frac{w(B_R(y))}{w(B_r(x))}\leq C\pt{\frac{R}r}^{D_w},\qquad \text{for all }0<r\leq R<\infty \text{ with }B_r(x)\subseteq B_R(y)\subseteq\Omega.
		\end{equation}
		for \(D_w=\log_2 \gamma_{w}\), and if we denote \(D_1:=\log_2 \gamma_{w_1}\) then we can take \(\chi_1=\frac{D_1}{D_1-p}\) in \eqref{poin-ineq_w1} and \eqref{sob-ineq_w1}.
	\end{remark}

	Regarding the weight \(w_2\), in addition to satisfy \eqref{doubling-w} and \eqref{inv-loc-int2} (in particular \(w_2\) also satisfies \eqref{w-ball-estimate1} for \(D_2:=\log_2 \gamma_{w_2}\)), we require that the following compatibility condition with the weight \(w_1\) is met: there exists \(q>p\) such that 
	\begin{equation}\label{poincare-condition}
		\frac{r}{R}\pt{\frac{w_2(B_r)}{w_2(B_R)}}^{\frac1q}\leq C\pt{\frac{w_1(B_r)}{w_1(B_R)}}^{\frac1p}.
	\end{equation}
	holds for all balls \(B_r\subset B_R\subset \Omega\). From \cite{FraGutWhe1994} (see also \cite[Theorem 7]{Bjorn2001}) we know that if \(1\leq p<q<\infty\), \(w_1\) is \(p\)-admissible, \(w_2\) is doubling and \eqref{poincare-condition} is satisfied, then the pair of weights \((w_1,w_2)\) satisfy the \((q,p)\)-local Poincaré-Sobolev inequality
	\begin{equation}\label{qp-Poincare}
		\pt{\fint_{B_R}\abs{u-u_{B,w_2}}^qw_2\dx}^{\frac1q}\leq CR\pt{\fint_{B_R}\abs{\nabla u}^pw_1\dx}^{\frac 1p},\quad\forall\, u \in C^1(B_R),
	\end{equation}
	and the \((q,p)\)-local Sobolev inequality
	\begin{equation}\label{qp-Sobolev}
		\pt{\fint_{B_R}\abs{u}^qw_2\dx}^{\frac1q}\leq CR\pt{\fint_{B_R}\abs{\nabla u}^pw_1\dx}^{\frac 1p},\quad\forall\, u \in C^1_c(B_R).
	\end{equation}
	
	\begin{remark}
		As it will be useful later we write \(D=\frac{qp}{q-p}\) and \(\chi_2=\frac{D}{D-p}=\frac{q}{p}\). Notice that this \(D\) comes from \eqref{poincare-condition} and in general it has nothing to do with \(D_2=\log_2 \gamma_{w_2}\), the dimensional constant associated to the doubling weight \(w_2\) mentioned before.
	\end{remark}
	
	In order to establish the main results of this work we recall some definitions regarding weighted spaces. For an admissible weight \(w\) we consider the weighted Lebesgue space
	\[
	L^{p,w}(\Omega)=\set{u:\Omega\to\RR \text{ measurable}: \int_{\Omega}\abs{u}^pw\dx<\infty}
	\]
	equipped with the norm
	\[
	\norm{u}_{p,w}^p=\int_{\Omega}\abs{u}^pw\dx.
	\]
	
	The \(p\)-admissibility of \(w_1\) is useful to have a proper definition for weighted Sobolev spaces: for an open set \(\Omega\subseteq\RR^N\) we define the weighted Sobolev space \(H^{1,p,w_1}(\Omega)\)
	\begin{equation*}\label{def-H}
		H^{1,p,w_1}(\Omega)=\text{the completion of }\set{u\in C^1(\Omega): u,\frac{\partial u}{\partial x_i}\in L^{p,w_1}(\Omega) \text{ for all }i}
	\end{equation*}
	equipped with the norm
	\begin{equation*}\label{sobolev-norm}
		\norm{u}_{1,p,w_1}^p=\norm{u}_{p,w_1}^p+\sum_{i=1}^N\norm{\frac{\partial u}{\partial x_i}}_{p,w_1}^p.
	\end{equation*}
	
	As we mentioned before the goal of this work is to generalize what was done in \cite{Cas2023}, that is to obtain qualitative and quantitative properties of weak solutions to \eqref{bb-eq}. To do so we first study the local regularity of weak solutions the following quasi-linear problem
	\begin{equation}\label{model-eq}
		\left\{
		\begin{aligned}
			\di \A(x,u,\nabla u)&=\B(x,u,\nabla u),&&\text{in }\Omega\subseteq\RR^N\\
			u&\in H^{1,p,w_1}_{loc}(\Omega),
		\end{aligned}
		\right.
	\end{equation}
	where \(\A:\Omega\times\RR\times\RR^N\to\RR^N\) and \(\B:\Omega\times\RR\times\RR^N\to \RR\) are functions verifying the Serrin-like conditions
	\begin{gather}
		\label{hyp-A1}\tag{H1}
		\A(x,u,z)\cdot z\geq w_1(x)\pt{a^{-1}\abs{z}^p-d_1\abs{u}^p-g},\\
		\label{hyp-A2}\tag{H2}
		\abs{\A(x,u,z)}\leq w_1(x)\pt{a\abs{z}^{p-1}+b\abs{u}^{p-1}+e},\\
		\label{hyp-B}\tag{H3}
		\abs{\B(x,u,z)}\leq w_2(x)\pt{c\abs{z}^{p-1}+d_2\abs{u}^{p-1}+f},
	\end{gather}
	for a constant \(a>0\) and measurable functions \(b,c,d_1,d_2,e,f,g:\Omega\to\RR^+\cup\set{0}\) satisfying
	\begin{equation}\label{struc-hyp}\tag{H\(_\ve\)}
		\begin{aligned}
			b,e\in L^{\frac{D_1}{p-1},w_1}(B_2),&\quad
			c\pt{\frac{w_2}{w_1}}^{1-\frac1p}\in L^{\frac{D_1}{1-\ve},w_2}(B_2),\\
			d_1,g \in L^{\frac{D_1}{p-\ve},w_1}(B_2),&\quad
			d_2,f \in L^{\frac{D}{p-\ve},w_2}(B_2).
		\end{aligned}
	\end{equation}
	for some \(0\leq \ve<1\).
	
	With the above into consideration, throughout the rest of this article the functions \(w_1,w_2\) will be a non-negative locally integrable weight functions satisfying \eqref{doubling-w}, \eqref{inv-loc-int2}, \(w_1\) will satisfy the local weighted \((1,p)\)-Poincaré inequality \eqref{weighted-poincare} and the pair \((w_1, w_2)\) will verify the compatibility condition \eqref{poincare-condition}. We will also suppose that \(1<p<\min\set{D_1,D}\).
	
	The first result of this work shows that weak solutions to \eqref{model-eq} are locally bounded.
	
	\begin{theorem}\label{thm-local-bdd}
		Suppose that there exists \(0<\ve<1\) such that \eqref{struc-hyp} is satisfied, then there exists a constant \(C>0\) depending on the norms of \(a,b,c,d_1,d_2\) such that for any weak solution to \eqref{model-eq} in \(B_2\) we have
		\begin{equation*}
			\norm{u}_{L^{\infty}(B_1)}\leq
			C\pt{
				[u]_{p,B_2}
				+
				k
			},
		\end{equation*}
		where
		\begin{equation}\label{def-k}
			k=
			\spt{
				\pt{\fint_{B_2}\abs{e}^{\frac{D_1}{p-1}}w_1}^{\frac{p-1}{D_1}}
				+
				\pt{\fint_{B_2}\abs{f}^{\frac{D}{p-\ve}}w_2}^{\frac{p-\ve}{D}}
			}^{\frac{1}{p-1}}
			+
			\spt{
				\pt{\fint_{B_2}\abs{g}^{\frac{D_1}{p-\ve}}w_1}^{\frac{p-\ve}{D_1}}
			}^{\frac1p}
		\end{equation}
		and for \(s>1\) and \(B\subseteq\Omega\) we write
		\begin{equation}\label{n-norm}
			[u]_{s,B}=\pt{\fint_B \abs{\bar u}^s w_1}^{\frac1{s}}+\pt{\fint_B \abs{\bar u}^s w_2}^{\frac1{s}}
		\end{equation}
	\end{theorem}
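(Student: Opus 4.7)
The plan is to adapt the classical Moser iteration, in the spirit of Serrin and Trudinger, to the doubly-weighted setting. The quantity $k$ in \eqref{def-k} is tailored so that, upon setting $\bar u = \abs{u} + k$, the inhomogeneous coefficients $e, f, g$ are dominated by appropriate constant powers of $k$ and can be absorbed into terms matching the principal-part structure. The iteration is then run on $\bar u$ and finally translated back to $u$, producing the additive $k$ in the conclusion.

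First I would derive a Caccioppoli-type estimate. For $\beta \geq 0$, a cutoff $\eta \in C^\infty_c(B_{r'})$ with $\eta \equiv 1$ on $B_r$ (where $1 \leq r < r' \leq 2$), and a truncation level $L > 0$, test the weak form of \eqref{model-eq} against $\varphi = \eta^p \min(\bar u, L)^{\beta} \operatorname{sgn}(u)$. Using \eqref{hyp-A1} on the left, \eqref{hyp-A2}--\eqref{hyp-B} on the right, Young's inequality to absorb the $\abs{\nabla u}^{p-1}$ cross-factors, and the definition of $k$ to eliminate $e, f, g$, and letting $L\to\infty$, one obtains an inequality of the form
\begin{equation*}
\int_{B_{r'}} \eta^p \bar u^{\beta - p + 1} \abs{\nabla \bar u}^p w_1 \leq C(1+\beta)^p\!\int_{B_{r'}} \abs{\nabla \eta}^p \bar u^{\beta + 1} w_1 + C(1+\beta)^p\!\int_{B_{r'}} \eta^p \Phi\, \bar u^{\beta + 1} (w_1+w_2),
\end{equation*}
where $\Phi$ collects the remaining lower-order coefficients $b, c, d_1, d_2$, with the weight-conversion factor $(w_2/w_1)^{1-1/p}$ applied to $c$.

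Next I would invoke both Sobolev inequalities on $v = \eta \bar u^{(\beta+1)/p}$: \eqref{sob-ineq_w1} yields $L^{\chi_1 p, w_1}$ control while \eqref{qp-Sobolev} yields $L^{q,w_2} = L^{\chi_2 p, w_2}$ control of the same $v$. The coefficient terms in $\Phi$ are handled via Hölder's inequality with the precise exponents prescribed by \eqref{struc-hyp}; the strict inequality $\ve > 0$ is essential, providing the small factor that permits absorption of the gradient piece and keeps the iteration constants tame. This yields the self-improving recursion
\begin{equation*}
[u]_{\chi(\beta+1),\, B_r} \leq \pt{\frac{C(1+\beta)}{r' - r}}^{\!\!p/(\beta+1)} [u]_{\beta+1,\, B_{r'}}, \qquad \chi := \min\{\chi_1, \chi_2\} > 1.
\end{equation*}
Choosing $\beta_n + 1 = p\chi^n$ and $r_n = 1 + 2^{-n}$ and iterating produces the claimed $L^\infty$ bound, as the product of the resulting constants converges ($\sum_n \chi^{-n}\log(C(1+\beta_n)2^n) < \infty$).

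The main obstacle is the bookkeeping needed to run two parallel Sobolev-improvement factors --- one in $w_1$, one in $w_2$ --- inside a single Moser iteration, while verifying that the cross-term involving $c$, which enters $\B$ weighted by $w_2$ but must pair against $\nabla u \in L^{p,w_1}$-type quantities, is correctly controlled by the specific combination $c(w_2/w_1)^{1-1/p} \in L^{D_1/(1-\ve),\,w_2}$ imposed in \eqref{struc-hyp}. Once the right Hölder pairings between each coefficient's integrability exponent and each Sobolev-improvement factor are matched, and once one checks that \(\min\{\chi_1,\chi_2\}>1\) governs both improvements uniformly, the remainder of the iteration is routine.
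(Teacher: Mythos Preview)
Your proposal is correct and follows essentially the same Moser--Serrin iteration as the paper: absorb $e,f,g$ via $\bar u=\abs{u}+k$, test with (truncated) powers of $\bar u$ against a cutoff, apply both weighted Sobolev inequalities \eqref{sob-ineq_w1} and \eqref{qp-Sobolev}, use the $\ve>0$ margin in the H\"older step to obtain a sub-unit power of the gradient term that can be absorbed, and iterate with $\chi=\min\{\chi_1,\chi_2\}$ on dyadic balls. The only differences are cosmetic---the paper uses Serrin's $F,G$ construction and his algebraic Lemma~2 (producing the explicit recursion constant $\alpha^{p/\ve}$) rather than your Young-then-H\"older treatment of the $c\abs{\nabla u}^{p-1}$ cross-term---and note a small index slip: testing with $\eta^p\bar u^{\beta}$ yields $\bar u^{\beta-1}$ on the left, so your displayed power $\beta-p+1$ actually corresponds to test-function exponent $\beta-p+2$.
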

	
	\begin{remark}
		We have chosen to exhibit the local regularity results only for the case \(B_1\subset B_2\subset \Omega\) as the general case \(B_R\subseteq B_{2R}\subseteq \Omega\) can be easily obtained by a suitable scaling argument (see \cite{Cas2023} where the computations are done in detail).
	\end{remark}
	
	Next we consider the case \(\ve=0\) and we show that weak solutions are in  \(L^{s,w_i}(B_1)\) for every \(s>p\).
	
	\begin{theorem}\label{thm-local-inte}
		Suppose that \eqref{struc-hyp} is satisfied for \(\ve=0\), then there exists a constant \(C>0\) depending on the norms of \(a,b,c,d_1,d_2\) such that for any weak solution to \eqref{model-eq} in \(B_2\) satisfies
		\begin{equation*}
			[u]_{s,B_1}\leq	C_s\pt{[u]_{p,B_2}+k}
		\end{equation*}
		for every \(s>p\) and \(k\) as in \eqref{def-k}.
	\end{theorem}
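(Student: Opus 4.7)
The plan is to run the same Moser iteration that underlies \Cref{thm-local-bdd}, but to stop after finitely many steps instead of pushing the iteration parameter to infinity. At \(\ve=0\) the structural hypotheses \eqref{struc-hyp} leave no slack to keep constants from blowing up in an infinite iteration, so a uniform \(L^\infty\) bound is out of reach; however, each single step of the iteration still gains a definite factor \(\chi=\min\set{\chi_1,\chi_2}>1\) in integrability, and any \(s>p\) is surpassed after finitely many such factors.

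Concretely, write \(\bar u=\abs{u}+k\) with \(k\) as in \eqref{def-k} and, for nested radii \(1\leq r_1<r_2\leq 2\), pick a cut-off \(\eta\in C^1_c(B_{r_2})\) identically \(1\) on \(B_{r_1}\) with \(\abs{\nabla\eta}\leq 2/(r_2-r_1)\). Testing the weak formulation of \eqref{model-eq} against \(\varphi=\eta^p\bar u^{s-p}u\) (after a standard truncation to make \(\varphi\) admissible), the lower bound \eqref{hyp-A1} gives a principal positive term of order \(a^{-1}\eta^p\bar u^{s-p}\abs{\nabla u}^pw_1\), while \eqref{hyp-A2}, \eqref{hyp-B}, and \(\nabla\eta\) generate error terms pairing the coefficients \(b,c,d_1,d_2,e,f,g\) with powers of \(\bar u\) and \(\abs{\nabla u}\). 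The purpose of the shift by \(k\) is to absorb the inhomogeneous terms \(e,f,g\) into bounds of the form \(k^{p-1}\bar u^{p-1}\), etc. Using Young's inequality to absorb the gradient-type error terms into the principal part, and Hölder's inequality with exponents matched \emph{exactly} to \eqref{struc-hyp} at \(\ve=0\), I would arrive at a Caccioppoli-type inequality of the form
\begin{equation*}
	\int_{B_{r_2}}\eta^p\abs{\nabla(\bar u^{s/p})}^pw_1\dx\leq \frac{C_s}{(r_2-r_1)^p}\int_{B_{r_2}}\bar u^s(w_1+w_2)\dx,
\end{equation*}
where \(C_s\) depends polynomially on \(s\) and on the norms appearing in \eqref{struc-hyp}.

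Applying the Sobolev inequality \eqref{sob-ineq_w1} and the cross Sobolev inequality \eqref{qp-Sobolev} to \(v=\eta\bar u^{s/p}\), and recalling \(\chi_1=\tfrac{D_1}{D_1-p}\), \(\chi_2=\tfrac{q}{p}\), the Caccioppoli bound converts into a reverse-Hölder-type estimate
\begin{equation*}
	[u]_{\chi s,B_{r_1}}\leq \frac{C_s}{(r_2-r_1)^p}[u]_{s,B_{r_2}},\qquad \chi=\min\set{\chi_1,\chi_2}>1.
\end{equation*}
Iterating this inequality on a telescoping sequence of radii decreasing from \(2\) to \(1\) (for instance \(r_j=1+2^{-j}\)) for a finite number \(n\) of steps produces \([u]_{\chi^n p,B_1}\leq C_n\pt{[u]_{p,B_2}+k}\). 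Since \(\chi^n p\to\infty\), any target \(s>p\) lies below \(\chi^n p\) for \(n\) large, and Hölder's inequality on the (locally finite) measures \(w_1\dx\) and \(w_2\dx\) recovers the intermediate exponents.

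The main obstacle is the book-keeping of the constant \(C_s\) in the Caccioppoli step: with \(\ve=0\) every Hölder pairing in the coefficient estimates is tight, so nothing prevents \(C_s\) from growing in \(s\); this is the very reason the iteration cannot be driven to \(s=\infty\). For any fixed finite \(s\), however, the number of iterations is bounded, and the resulting \(s\)-dependent constant matches the one permitted by the statement.
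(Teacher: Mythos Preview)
Your overall architecture---finite-step Moser iteration with an \(s\)-dependent constant---matches the paper. The gap is in the Caccioppoli step itself. You assert that Young's inequality absorbs the gradient-type errors and that H\"older ``with exponents matched exactly to \eqref{struc-hyp} at \(\ve=0\)'' then yields
\[
\int_{B_{r_2}}\eta^p\abs{\nabla(\bar u^{s/p})}^pw_1\leq \frac{C_s}{(r_2-r_1)^p}\int_{B_{r_2}}\bar u^s(w_1+w_2),
\]
but at \(\ve=0\) this is precisely what fails by a direct estimate. Consider for instance the \(\bar d_1\) term: H\"older at the critical exponent gives
\[
\fint_{B_{r_2}}\bar d_1\abs{v\eta}^p w_1\leq \norm{\bar d_1}_{\frac{D_1}{p},w_1}\pt{\fint_{B_{r_2}}\abs{v\eta}^{\chi_1 p}w_1}^{\frac{1}{\chi_1}}\leq C\fint_{B_{r_2}}\abs{\nabla(v\eta)}^p w_1,
\]
and the last quantity is the \emph{full} principal term, multiplied by a fixed structural constant rather than a small parameter. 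The same happens with the \(c\) and \(\bar d_2\) terms. There is nothing to absorb with, and Young's inequality does not help here because the problematic factor is already at the top exponent. This is not merely a matter of \(C_s\) growing; without an extra idea the inequality does not close at all.

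The paper resolves this by splitting each coefficient at a level \(M\): on \(\set{\bar d_1\leq M}\) one gets the harmless lower-order term \(M\fint\abs{v\eta}^p w_1\), while on \(\set{\bar d_1>M}\) the tight H\"older estimate carries the factor \(\bigl(w_1(B_2)^{-1}\int_{\set{\bar d_1>M}}\bar d_1^{D_1/p}w_1\bigr)^{p/D_1}\), which tends to \(0\) as \(M\to\infty\) because \(\bar d_1\in L^{D_1/p,w_1}\). Choosing \(M=M(\alpha)\) large makes this remainder small enough to absorb into the left side, at the price of an \(\alpha\)-dependent (hence \(s\)-dependent) constant. The analogous splitting handles \(\bar c\) and \(\bar d_2\). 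Once you insert this device, the rest of your outline goes through.
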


	Finally, we show that the Harnack inequality holds for non-negative weak solutions to \eqref{model-eq}.
	
	\begin{theorem}[Harnack]\label{harnack-thm}
		Under the same hypotheses of \cref{thm-local-bdd} with the additional assumption that \(u\) is a non-negative weak solution of \(\di\A=\B\) in \(B_{3}\) then
		\begin{equation*}
			\max_{B_{1}}u\leq C\pt{\min_{B_1}u+k} 
		\end{equation*}
		where \(C\) and \(k\) are as in \cref{thm-local-bdd}.
	\end{theorem}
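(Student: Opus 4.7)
The plan is to follow the classical Moser--Trudinger scheme in the doubly weighted form already used in \cite{Cas2023}. The argument decomposes into three pillars: a supremum estimate in terms of a small positive $L^s$-norm, an infimum estimate in terms of a small negative $L^{-s}$-norm, and a BMO bound for $\log(u+k)$; the three are then linked by a weighted John--Nirenberg inequality.

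First I would sharpen the Moser iteration behind \cref{thm-local-bdd} by tracking all positive exponents. Writing $\bar u = u+k$ and testing the equation against $\eta^p \bar u^{\beta}$ for $\beta > p-1$, with the aid of the weighted Sobolev inequalities \eqref{sob-ineq_w1} and \eqref{qp-Sobolev}, one obtains
\begin{equation*}
	\sup_{B_1} \bar u \leq C_s \pt{\fint_{B_{3/2}} \bar u^{s} w_1\dx}^{\frac1s}, \qquad \text{for every } s>0.
\end{equation*}
A parallel iteration with the test function $\eta^p \bar u^{-\beta}$, $\beta>0$ (admissible because $\bar u \geq k>0$), produces the dual estimate
\begin{equation*}
	\pt{\fint_{B_{3/2}} \bar u^{-s} w_1\dx}^{-\frac1s} \leq C_s \inf_{B_1} \bar u, \qquad \text{for every } s>0.
\end{equation*}

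To bridge these two ranges I would show that $v := \log \bar u$ has bounded mean oscillation with respect to $w_1$. Choosing the critical logarithmic test function $\eta^p \bar u^{1-p}$ in the weak formulation yields a Caccioppoli-type bound of the form
\begin{equation*}
	\int_{B_{3/2}} \abs{\nabla v}^p w_1 \dx \leq C\, w_1(B_{3/2}),
\end{equation*}
after absorbing the $\B$-terms by Hölder's inequality together with \eqref{qp-Sobolev} and the integrability assumption \eqref{struc-hyp}. The weighted $(1,p)$-Poincaré inequality \eqref{weighted-poincare} then gives a uniform bound on the BMO norm of $v$ relative to the measure $w_1 \dx$. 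Since $(B_2, w_1 \dx)$ is a space of homogeneous type on which a Poincaré inequality holds, the John--Nirenberg inequality supplies an exponent $s_0 > 0$ such that
\begin{equation*}
	\pt{\fint_{B_{3/2}} \bar u^{s_0} w_1\dx}^{\frac1{s_0}} \pt{\fint_{B_{3/2}} \bar u^{-s_0} w_1\dx}^{\frac1{s_0}} \leq C.
\end{equation*}
Chaining the three inequalities with $s = s_0$ yields $\sup_{B_1} \bar u \leq C \inf_{B_1} \bar u$, which, once we recall that $\bar u = u+k$, is exactly the stated Harnack inequality.

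The main obstacle is the interaction between the two weights in the BMO step: the source term $\B$ is naturally controlled by $w_2$, whereas the test function and the Poincaré inequality used to derive the BMO bound live in the $w_1$ universe. Transferring the $w_2$-contributions onto the $w_1$-side relies crucially on the compatibility condition \eqref{poincare-condition} and the resulting $(q,p)$-Sobolev inequality \eqref{qp-Sobolev}; the integrability thresholds fixed in \eqref{struc-hyp} are calibrated so that Hölder's inequality produces an extra factor $R^{\ve}$ of the radius, which can be absorbed into the iteration constants at small enough scales. Once this absorption is achieved, the remainder of the argument is a weighted transcription of the standard Moser--Serrin--Trudinger proof.
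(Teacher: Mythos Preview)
Your three-pillar outline (positive Moser iteration, negative Moser iteration, logarithmic BMO bridged by John--Nirenberg) is exactly the route the paper takes. However, two points in your write-up are oversimplified to the extent that the argument, as stated, does not close.

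First, you run the entire scheme with respect to $w_1$ alone. In the doubly weighted situation this is not possible: the Caccioppoli inequality that comes out of testing with $\eta^p\bar u^{\beta}$ inevitably leaves a zero-order term $\fint|\eta v|^p\,w_2$ on the right (exactly as in \eqref{bas-esti} and \eqref{fund-esti}), because under \eqref{struc-hyp} with $\ve>0$ one only has $\bar d_2\in L^{D/(p-\ve),w_2}$, not $L^{D/p,w_2}$, so the $w_2$-piece cannot be fully absorbed into the gradient via \eqref{qp-Sobolev}. Consequently the Moser steps must iterate the \emph{combined} quantity $\Psi(s,h)=\bigl(\fint_{B_h}\bar u^{\,s}w_1\bigr)^{1/s}+\bigl(\fint_{B_h}\bar u^{\,s}w_2\bigr)^{1/s}$, and the crossover $\Psi(p_0,2)\le C\,\Psi(-p_0,2)$ requires John--Nirenberg for \emph{both} doubling measures $w_1\dx$ and $w_2\dx$, with $p_0$ the minimum of the two exponents. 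Your displayed sup, inf, and crossover estimates, written only with $w_1$, therefore do not follow.

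Second, and more seriously, your BMO step asserts a single-scale bound $\int_{B_{3/2}}|\nabla v|^p w_1\le C\,w_1(B_{3/2})$ and then invokes Poincar\'e. A bound at one fixed radius does not yield BMO: one needs the scale-invariant estimate $h\bigl(\fint_{B_h}|\nabla v|^p w_1\bigr)^{1/p}\le C$ uniformly over \emph{all} balls $B_h\subset B_2$. Establishing this uniformity is precisely where the compatibility condition \eqref{poincare-condition} is used in earnest: after testing with $\eta^p\bar u^{1-p}$ on an arbitrary $B_{3h/2}$, the $w_2$-contributions produce factors like $h^p\,w_2(B_{3h/2})^{1-(p-\ve)/D}\big/ w_1(B_h)$, and one must check via \eqref{poincare-condition} and \eqref{w-ball-estimate1} that each such ratio is bounded independently of $0<h\le 2$. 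Your closing remark about an ``extra factor $R^{\ve}$ at small enough scales'' gestures at this, but the actual content is a uniform-in-$h$ estimate, not a small-scale absorption; without it the BMO conclusion fails. Once these two points are fixed, the argument coincides with the paper's.
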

	
	Finally we return to \eqref{bb-eq} and we obtain a general result regarding the behavior at infinity of solutions. To do that we will suppose that in addition to the above conditions, both weights \(w_1,w_2\) verify global Sobolev inequalities, that is, there exists a constant \(C>0\) such that
	\begin{equation}\label{sob-ineq-w_1}
		\pt{\int_\Omega\abs{u}^{q_1}w_1\dx}^{\frac1{q_1}}\leq C\pt{\int_\Omega\abs{\nabla u}w_1\dx}^{\frac1p}
	\end{equation}
	for \(q_1=\chi_1 p\) and
	\begin{equation}\label{sob-ineq-w_2}
		\pt{\int_\Omega\abs{u}^qw_2\dx}^{\frac1q}\leq C\pt{\int_\Omega\abs{\nabla u}w_1\dx}^{\frac1p}
	\end{equation}
	for \(q\) as in \eqref{poincare-condition}, and all \(u\in C^1_c(\Omega)\). Under these assumptions, and if we define \(D^{1,p,w_1}(\Omega)\) as the closure of \(C^\infty_c(\Omega)\) under the (semi) norm \(\norm{\nabla u}_{p,w_1}\) then \(D^{1,p,w_1}(\Omega)\) embeds continuously into both \(L^{q_1,w_1}(\Omega)\) and  \(L^{q,w_2}(\Omega)\) and we are able to prove
	
	\begin{theorem}[Decay]\label{decay-thm}
		Suppose \(u\in D^{1,p,w_1}(\Omega)\) is a weak solution to \eqref{bb-eq}. Then there exists \(R_0>1\), \(C>0\) and \(\lambda>0\) such that
		\begin{equation*}
			\abs{u(x)}\leq \frac{C}{\abs{x}^{\frac{p}{q_1-p}+\lambda}},
		\end{equation*}
		for all \(\abs{x}>R_0\) in \(\Omega\).
	\end{theorem}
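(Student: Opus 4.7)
The plan is to combine \cref{thm-local-bdd}, \cref{thm-local-inte}, and the global Sobolev inequalities \eqref{sob-ineq-w_1} and \eqref{sob-ineq-w_2} to translate decay of global integrals of $u$ into pointwise decay. The idea is to apply the local regularity estimates on balls whose radius is a fixed fraction of $|x|$: on such balls, the only nontrivial coefficient $d_2=|u|^{q-p}$ has norms that vanish as $|x|\to\infty$, so the constants in \cref{thm-local-bdd} are uniform, and H\"older's inequality combined with the doubling/Sobolev behaviour of the weights produces an explicit power-type decay.

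First, I would recognise \eqref{bb-eq} as an instance of \eqref{model-eq}, taking $\A(x,u,z)=w_1(x)|z|^{p-2}z$ and $\B(x,u,z)=-w_2(x)|u|^{q-2}u$. All coefficients in \eqref{struc-hyp} vanish except $a=1$ and $d_2=|u|^{q-p}$; the identity $(q-p)D/p=q$, together with $u\in L^{q,w_2}(\Omega)$ provided by \eqref{sob-ineq-w_2}, implies that \eqref{struc-hyp} holds locally with $\ve=0$. Invoking \cref{thm-local-inte} gives $u\in L^{s,w_i}_{loc}$ for every $s>p$, which upgrades $d_2$ to $L^{D/(p-\ve),w_2}_{loc}$ for some $\ve\in(0,1)$ and makes \cref{thm-local-bdd} applicable with this $\ve$ and with $k=0$.

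For $|x|$ large, I would then apply a rescaled version of \cref{thm-local-bdd} on $B_\rho(x)$ with $\rho=|x|/4$. Its constant depends on $\|d_2\|_{L^{D/(p-\ve),w_2}(B_\rho(x))}$, a tail of a convergent integral that tends to zero as $|x|\to\infty$, so the constant is uniformly bounded. Estimating the $L^p$-averages appearing in $[u]_{p,B_\rho(x)}$ by H\"older against the $L^{q_1,w_1}$ and $L^{q,w_2}$ norms, and using lower bounds $w_i(B_\rho(x))\gtrsim\rho^{D_i}$ (obtained by testing \eqref{sob-ineq-w_1} and \eqref{sob-ineq-w_2} on bump functions and invoking the doubling property), one arrives at
\begin{equation*}
	|u(x)|\leq C\,|x|^{-p/(q_1-p)}\bigl(\|u\|_{L^{q_1,w_1}(B_\rho(x))}+\|u\|_{L^{q,w_2}(B_\rho(x))}\bigr).
\end{equation*}
Since both tails on the right vanish as $|x|\to\infty$, this yields the preliminary qualitative decay $u(x)=o(|x|^{-p/(q_1-p)})$.

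The final step is to promote this $o$-decay to the quantitative $O(|x|^{-p/(q_1-p)-\lambda})$ with $\lambda>0$. Starting from the preliminary bound $|u(y)|\leq C|y|^{-p/(q_1-p)}\eta(|y|)$ on $\{|y|>R_0\}$ with $\eta(r)\to 0$, I would reinsert it into $\|u\|_{L^{s,w_i}(B_\rho(x))}$ for a supercritical exponent $s$ (whose finiteness is granted by \cref{thm-local-inte}) and use the upper volume bound $w_i(B_\rho(x))\lesssim\rho^{D_i}$ to extract a genuine polynomial decay for the tail, of size $|x|^{-\lambda}$ for some fixed $\lambda>0$. Feeding this improved estimate back through \cref{thm-local-bdd} then yields the desired pointwise decay. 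The main obstacle I anticipate is verifying that the iteration produces a strict improvement $\lambda>0$ that does not degenerate across steps, which requires exploiting the gap between the critical exponents $q_i$ and the higher integrability granted by \cref{thm-local-inte}, and tracking carefully how the constants in \cref{thm-local-bdd} depend on $\ve$ and on $\|d_2\|$; this parallels the analogous step in the equal-weights case treated in \cite{Cas2023}.
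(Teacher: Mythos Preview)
Your plan correctly identifies the structure \(\di\A=\B\) with \(d_2=|u|^{q-p}\), correctly bootstraps from \cref{thm-local-inte} to \cref{thm-local-bdd}, and correctly recovers the exponent \(p/(q_1-p)\) from the scaling. However, the final step---upgrading the qualitative \(o\bigl(|x|^{-p/(q_1-p)}\bigr)\) to the quantitative \(O\bigl(|x|^{-p/(q_1-p)-\lambda}\bigr)\)---does not close as written. Reinserting a pointwise bound of the form \(|u(y)|\leq C|y|^{-p/(q_1-p)}\eta(|y|)\) into \(\norm{u}_{L^{s,w_i}(B_\rho(x))}\) and using \(w_i(B_\rho(x))\lesssim \rho^{D_i}\) only reproduces the same \(|x|^{-p/(q_1-p)}\) together with the same non-quantitative factor \(\eta(|x|)\); no fixed \(\lambda>0\) appears because tails of convergent integrals can vanish arbitrarily slowly. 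The ``gap between the critical exponents and higher integrability'' that you invoke does not by itself generate a polynomial rate.

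The paper obtains \(\lambda>0\) by a different mechanism that your proposal is missing: a hole-filling argument on exterior domains. Testing \eqref{bb-eq} with \(\varphi=\eta^p u\), where \(\eta\) vanishes on \(B_R\) and equals \(1\) outside \(B_{2R}\), and using that \(\int_{\Omega\setminus B_R}|u|^q w_2\) is small for \(R\geq R_0\), one gets
\[
\int_{\Omega\setminus B_{2R}}|u|^{q_1}w_1\leq \theta\int_{\Omega\setminus B_R}|u|^{q_1}w_1
\]
with a fixed \(\theta\in(0,1)\); iterating gives \(\norm{u}_{L^{q_1,w_1}(\Omega\setminus B_R)}\leq C(R_0/R)^{\tau}\) with \(\tau=-\log_2\theta>0\). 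This is the source of the extra \(\lambda\). A second lemma (a Moser iteration with the test function \(\eta^p G(u)\) and cutoffs supported in \(\Omega\setminus B_R\)) then converts this to \(\norm{u}_{L^{s,w_i}(\Omega\setminus B_{2R})}\leq C R^{-p/(q_1-p)+o_s(1)}\norm{u}_{L^{q_1,w_1}(\Omega\setminus B_R)}\). Finally, the paper applies \cref{thm-local-bdd} on balls of \emph{fixed} radius \(r<R_0/4\), so all local constants are automatically uniform and the scaling issues you would face (in particular, controlling the \(w_2\)-average term when the ball radius grows with \(|x|\)) are avoided entirely.
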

	
	\begin{remark}
		It is important to mention that this decay behavior is not optimal, but it can be used as a starting point to obtain better results. This can be done with the aid of a comparison principle a the construction of a suitable barrier function depending on the weights \(w_1,w_2\). We refer the reader to \cite[Section 4]{Cas2023} where power type weights and monomial weights are considered in the case \(w_1=w_2\).
	\end{remark}
	
	The rest of this article is dedicated to the proofs of the above results. In \cref{sec-apriori} we study \eqref{model-eq} and obtain the proofs of \cref{thm-local-bdd,thm-local-inte,harnack-thm} whereas in \cref{sec-infty} we turn to the proof of \cref{decay-thm}.

	\section{Local estimates}\label{sec-apriori}
	
	Throughout the different proofs in this section we will use the dimensional constants of the weights \(D_i:=D_{w_i}\) as well as the local Sobolev exponents \(q_1:=\frac{D_1p}{D_1-p}\) and \(D=\frac{qp}{q-p}\) for \(q\) given by \eqref{poincare-condition}. With these notations we also have
	\[
	\chi_1=\frac{q_1}{p}=\frac{D_1}{D_1-p}\quad\text{and}\quad \chi_2=\frac{q}{p}=\frac{D}{D-p}
	\]
	
	Following \cite{Serrin1964} (and what we did in \cite{Cas2023}) we define \(F:[k,\infty)\to\RR\) as
	\begin{equation*}\label{func-F}
		F(x)=F_{\alpha,k,l}(x)=\begin{dcases}
			x^{\alpha}&\text{if }k\leq x\leq l,\\
			l^{\alpha-1}\pt{\alpha x-(\alpha -1)l}&\text{if }x>l,
		\end{dcases}
	\end{equation*}
	which is in \(C^1([k,\infty))\) with \(\abs{F'(x)}\leq \alpha l^{\alpha-1}\). We consider \(\bar x=\abs{x}+k\) and \(G:\RR\to\RR\) defined as
	\begin{equation*}\label{func-G}
		G(x)=G_{\alpha,k,l}(x)=\mathrm{sign}(x)\pt{F(\bar x)\abs{F'(\bar x)}^{p-1}-\alpha^{p-1}k^\beta}
	\end{equation*}
	where \(\beta=1+p(\alpha-1)\). Observe that \(G\) is a piecewise smooth function which is linear if \(\abs{x}>l-k\) and that both \(F\) and \(G\) satisfy
	\begin{gather*}
		\abs{G}\leq F(\bar x)\abs{F'(\bar x)}^{p-1}\\
		\bar xF'(\bar x)\leq \alpha F(\bar x)\\
		F'(\bar x)\leq \alpha F(\bar x)^{1-\frac{1}{\alpha}}
	\end{gather*}
	and
	\[
	G'(x)=\begin{dcases}
		\frac{\beta}{\alpha}\abs{F'(\bar x)}^p&\text{if }\abs{x}<l-k,\\
		\abs{F'(\bar x)}^p&\text{if }\abs{x}>l-k.
	\end{dcases}
	\]
	
	Finally, observe that if \(\eta\in C^\infty_c(\Omega)\) and if \(u\in H^{1,p,w_1}_{loc}(\Omega)\) then \(\varphi=\eta^pG(u)\) is a valid test function in
	\[
	\int_\Omega \A(x,u,\nabla u)\nabla\varphi+\B(x,u,\nabla u)\varphi=0
	\]
	thanks to the results in \cite[Chapter 1]{HeKiMa2006} regarding weighted Sobolev spaces for \(p\)-admissible weights.
	
	We can now prove the local boundedness of weak solutions.
	
	\begin{proof}[Proof of \cref{thm-local-bdd}]
		By using \eqref{hyp-A1}-\eqref{hyp-B} we can write
		\begin{equation}\label{hyp-mod}
			\begin{aligned}
				\abs{\A(x,u,z)}&\leq w_1\pt{a\abs{z}^{p-1}+\bar b\bar u^{p-1}},\\
				\A(x,u,z)\cdot z&\geq w_1\pt{\abs{z}^{p}-\bar d_1\bar u^{p}},\\
				\abs{\B(x,u,z)}&\leq w_2\pt{c\abs{z}^{p-1}+\bar d_2\bar u^{p-1}},
			\end{aligned}
		\end{equation}
		where
		\begin{gather*}
			\bar b=b+k^{1-p}e,\\
			\bar d_1=d_1+k^{-p}g,\\
			\bar d_2=d_2+k^{1-p}f,
		\end{gather*}
		and \(\bar u=\abs{u}+k\) for \(k\geq 0\) defined as\footnote{If \(e=f=g=0\) we can take any \(k>0\) and at the very end we can pass to the limit \(k\to 0^+\).}  
		\begin{equation*}
			k=
			\spt{
				\pt{\fint_{B_2}\abs{e}^{\frac{D_1}{p-1}}w_1}^{\frac{p-1}{D_1}}
				+
				\pt{\fint_{B_2}\abs{f}^{\frac{D}{p-\ve}}w_2}^{\frac{p-\ve}{D}}
			}^{\frac{1}{p-1}}
			+
			\spt{
				\pt{\fint_{B_2}\abs{g}^{\frac{D_1}{p-\ve}}w_1}^{\frac{p-\ve}{D_1}}
			}^{\frac1p}.
		\end{equation*}
		Observe that \eqref{struc-hyp} implies that
		\begin{equation}\label{k-esti}
			\fint_{B_2}\abs{\bar b}^{\frac{D_1}{p-1}}w_1\leq C,\qquad
			\fint_{B_2}\abs{\bar d_1}^{\frac{D_1}{p-\ve}}w_1\leq C,\qquad
			\fint_{B_2}\abs{\bar d_2}^{\frac{D}{p-\ve}}w_2\leq C,\qquad
		\end{equation}
		for some constant \(C>0\) depending on the respective local norms of \(b,d_1,d_2,e,f,g\).
		
		For a local weak solution \(u\) and arbitrary \(\eta\in C^\infty_c(B_2)\) we use \(\varphi=\eta^pG(u)\) and with the aid of \eqref{hyp-mod} one can obtain the estimate
		\begin{align*}
			\A\cdot\nabla \varphi+\B\varphi&=\eta^p G'(u)\A\cdot\nabla u+p\eta^{p-1}G(u)\A\cdot\nabla \eta+\eta^pG(u)\B\\
			&\geq \eta^p G'(u)w_1\pt{\abs{\nabla u}^{p}-\bar d_1\bar u^{p}}
			-p\eta^{p-1}\abs{\nabla \eta G(u)}w_1\pt{a\abs{\nabla u}^{p-1}+\bar b\bar u^{p-1}}\\
			&\quad-\eta^p\abs{G(u)}w_2\pt{c\abs{\nabla u}^{p-1}+\bar d_2\bar u^{p-1}}
		\end{align*}
		so that if \(v=F(\bar u)\) one reaches
		\begin{multline}\label{bas-esti0}
			\A\cdot\nabla \varphi+\B\varphi\geq
			\abs{\eta \nabla v}^pw_1
			-pa\abs{v\nabla \eta}\abs{\eta \nabla v}^{p-1}w_1
			-p\alpha^{p-1}\bar b\abs{v\nabla \eta}\abs{\eta v}^{p-1}w_1\\
			-\beta\alpha^{p-1}\bar d_1\abs{\eta v}^{p}w_1
			-c\eta v\abs{\eta \nabla v}^{p-1}w_2
			-\alpha^{p-1}\bar d_2\abs{\eta v}^{p}w_2
		\end{multline}
		
		We integrate over \(B_2\) and divide by \(w_1(B_2)\) to obtain
		\begin{multline*}
			\fint_{B_2}\abs{\eta\nabla v}^pw_1
			\leq
			pa \fint_{B_2}\abs{v\nabla \eta}\abs{\eta\nabla v}^{p-1}w_1
			+
			p\alpha^{p-1} \fint_{B_2}\bar b\abs{v\nabla \eta}\abs{v\eta}^{p-1}w_1\\
			+\beta\alpha^{p-1}\fint_{B_2}\bar d_1\abs{v\eta}^pw_1
			+\frac{1}{w_1(B_2)} \int_{B_2} cv\eta\abs{\eta\nabla v}^{p-1}w_2
			+\frac{\alpha^{p-1}}{w_1(B_2)} \int_{B_2}\bar d_2\abs{v\eta}^pw_2,
		\end{multline*}
		
		but since \(w_2(B_2)=Cw_1(B_2)\) for \(C=C(x_0,w_1,w_2)=\frac{w_2(B_2)}{w_1(B_2)}\)  we can write
		\begin{multline}\label{bas-esti}
			\fint_{B_2}\abs{\eta\nabla v}^pw_1
			\leq pa
			\fint_{B_2}\abs{v\nabla \eta}\abs{\eta\nabla v}^{p-1}w_1
			+p\alpha^{p-1}
			\fint_{B_2}\bar b\abs{v\nabla \eta}\abs{v\eta}^{p-1}w_1\\
			+\beta\alpha^{p-1}\fint_{B_2}\bar d_1\abs{v\eta}^pw_1
			+C\fint_{B_2} cv\eta\abs{\eta\nabla v}^{p-1}w_2
			+C\alpha^{p-1}\fint_{B_2}\bar d_2\abs{v\eta}^pw_2,
		\end{multline}
		and each term on the right hand side can be estimated using \eqref{sob-ineq_w1}, \eqref{qp-Sobolev}, and \eqref{k-esti} as follows: 
		\begin{equation}\label{gath1}
			\fint_{B_2}\abs{v\nabla \eta}\abs{\eta\nabla v}^{p-1}w_1\leq \pt{\fint_{B_2}\abs{v\nabla \eta}^pw_1}^{\frac1p}\pt{\fint_{B_2}\abs{\eta\nabla v}^{p}w_1}^{1-\frac1p},
		\end{equation}
		if \(D_1\) the dimensional constant associated to the weight \(w_1\) then
		\begin{equation}\label{gath2}
			\begin{aligned}
				\fint_{B_2}\bar b\abs{v\nabla \eta}\abs{v\eta}^{p-1}w_1
				&\leq
				\pt{\fint_{B_2}\bar b^{\frac{D_{1}}{p-1}}w_1}^{\frac{p-1}{D_{1}}}
				\pt{\fint_{B_2}\abs{v\nabla \eta}^pw_1}^{\frac1p}
				\pt{\fint_{B_2}\abs{v\eta}^{\chi_1p}w_1}^{\frac{p-1}{\chi_1p}}\\
				&\leq
				C
				\pt{\fint_{B_2}\abs{v\nabla \eta}^pw_1}^{\frac1p}
				\pt{\fint_{B_2}\abs{\nabla(v\eta)}^{p}w_1}^{1-\frac1p},
			\end{aligned}
		\end{equation}
		and
		\begin{equation}\label{gath4}
			\begin{aligned}
				\fint_{B_2}\bar d_1\abs{v\eta}^pw_1
				&=\fint_{B_2}\bar d_1\abs{v\eta}^\ve\abs{v\eta}^{p-\ve}w_1\\
				&\leq
				\pt{\fint_{B_2}\bar d_1^{\frac{D_{1}}{p-\ve}}w_1}^{\frac{p-\ve}{D_1}}
				\pt{\fint_{B_2}\abs{v\eta}^pw_1}^{\frac{\ve}{p}}
				\pt{\fint_{B_2}\abs{v\eta}^{\chi_1p}w_1}^{\frac{p-\ve}{\chi_1p}}\\
				&\leq
				C
				\pt{\fint_{B_2}\abs{v\eta}^pw_1}^{\frac{\ve}{p}}
				\pt{\fint_{B_2}\abs{\nabla(v\eta)}^{p}w_1}^{1-\frac{\ve}{p}},
			\end{aligned}
		\end{equation}
		whereas for \(D=\frac{pq}{q-p}\) and \(\bar c=c\pt{\frac{w_2}{w_1}}^{1-\frac1p}\) we have
		\begin{equation}\label{gath3}
			\begin{aligned}
				\fint_{B_2} cv\eta\abs{\eta\nabla v}^{p-1}w_2
				&=\fint_{B_2} \bar c w_2^{\frac{1-\ve}{D}}\abs{v\eta}^{\ve}w_2^{\frac{\ve}{p}}\abs{v\eta}^{1-\ve}w_2^{\frac{1-\ve}{q}}\abs{\eta\nabla v}^{p-1}w_1^{1-\frac1p}\\
				&\leq
				\pt{\fint_{B_2} \abs{\bar c}^{\frac{D}{1-\ve}}w_2}^{{\frac{1-\ve}{D}}}\\
				&\qquad\times
				\pt{\fint_{B_2} \abs{v\eta}^{p}w_2}^{\frac{\ve}{p}}
				\pt{\fint_{B_2} \abs{v\eta}^{q}w_2}^{\frac{1-\ve}{q}}
				\pt{\fint_{B_2} \abs{\eta\nabla v}^{p}w_1}^{1-\frac1p}\\
				&\leq
				C
				\pt{\fint_{B_2} \abs{v\eta}^{p}w_2}^{\frac{\ve}{p}}
				\pt{\fint_{B_2} \abs{\nabla(v\eta)}^{p}w_1}^{\frac{1-\ve}{p}}
				\pt{\fint_{B_2} \abs{\eta\nabla v}^{p}w_1}^{1-\frac1p},
			\end{aligned}
		\end{equation}
		and
		\begin{equation}\label{gath5}
			\begin{aligned}
				\fint_{B_2}\bar d_2\abs{v\eta}^pw_2
				&=\fint_{B_2}\bar d_2\abs{v\eta}^\ve\abs{v\eta}^{p-\ve}w_2\\
				&\leq
				\pt{\fint_{B_2}\bar d_2^{\frac{D}{p-\ve}}w_2}^{\frac{p-\ve}{D}}
				\pt{\fint_{B_2}\abs{v\eta}^pw_2}^{\frac{\ve}{p}}
				\pt{\fint_{B_2}\abs{v\eta}^{q}w_2}^{\frac{p-\ve}{q}}\\
				&\leq
				C
				\pt{\fint_{B_2}\abs{v\eta}^pw_2}^{\frac{\ve}{p}}
				\pt{\fint_{B_2}\abs{\nabla(v\eta)}^{p}w_1}^{1-\frac{\ve}{p}}.
			\end{aligned}
		\end{equation}
		Therefore \eqref{bas-esti}, \eqref{gath1}, \eqref{gath2}, \eqref{gath4}, \eqref{gath3} and \eqref{gath5} give
		\begin{equation}\label{bas-esti2}
			\begin{aligned}
				\fint_{B_2}\abs{\eta\nabla v}^pw_1
				&\leq
				pa
				\pt{\fint_{B_2}\abs{v\nabla \eta}^pw_1}^{\frac1p}
				\pt{\fint_{B_2}\abs{\eta\nabla v}^{p}w_1}^{1-\frac1p}\\
				&+Cp\alpha^{p-1}\spt{
					\pt{\fint_{B_2}\abs{v\nabla\eta}^{p}w_1}+\pt{\int_{B_2}\abs{v\nabla \eta}^pw_1}^{\frac1p}
					\pt{\fint_{B_2}\abs{\eta\nabla v}^{p}w_1}^{1-\frac1p}
				}\\
				&+C\beta\alpha^{p-1}
				\pt{\fint_{B_2}\abs{v\eta}^pw_1}^{\frac{\ve}{p}}
				\spt{
					\pt{\fint_{B_2}\abs{v\nabla \eta}^{p}w_1}^{1-\frac{\ve}{p}}
					+
					\pt{\fint_{B_2}\abs{\eta \nabla v}^{p}w_1}^{1-\frac{\ve}{p}}
				}\\
				&+C\pt{\fint_{B_2} \abs{v\eta}^{p}w_2}^{\frac{\ve}{p}}\\
				&\qquad \times
				\spt{
					\pt{\fint_{B_2} \abs{\eta\nabla v}^{p}w_1}^{1-\frac1p}\pt{\int_{B_2} \abs{v\nabla\eta}^{p}w_1}^{\frac{1-\ve}{p}}
					+
					\pt{\fint_{B_2} \abs{\eta\nabla v}^{p}w_1}^{1-\frac{\ve}p}
				}
				\\
				&+C\alpha^{p-1}
				\pt{\fint_{B_2}\abs{v\eta}^pw_2}^{\frac{\ve}{p}}
				\spt{
					\pt{\fint_{B_2}\abs{v\nabla\eta}^{p}w_1}^{1-\frac{\ve}{p}}
					+
					\pt{\fint_{B_2}\abs{\eta\nabla v}^{p}w_1}^{1-\frac{\ve}{p}}
				}.
			\end{aligned}
		\end{equation}
		If one considers
		\[
		z=\frac{\pt{\fint_{B_2}\abs{\eta\nabla v}^pw_1}^{\frac1p}}{\pt{\fint_{B_2}\abs{v\nabla \eta}^pw_1}^{\frac1p}}
		\]
		and
		\[
		\zeta=\frac{\pt{\fint_{B_2}\abs{\eta v}^pw_1}^{\frac1p}+\pt{\fint_{B_2}\abs{\eta v}^pw_2}^{\frac1p}}{\pt{\fint_{B_2}\abs{v\nabla \eta}^pw_1}^{\frac1p}}
		\]
		then, because \(\alpha\geq 1\), \eqref{bas-esti2} becomes
		\[
		z^p\leq C\pt{z^{p-1}+\alpha^{p-1}(1+z^{p-1})+\zeta^\ve(z^{p-1}+z^{p-\ve})+(1+\beta)\alpha^{p-1}\zeta^\ve(1+z^{p-\ve})}
		\]
		for some constant \(C>0\) depending on \(a,b,c,d,e,f,g, w_1, w_2\) and \(p\). With the aid of \cite[Lemma 2]{Serrin1964} we obtain
		\[
		z\leq C\alpha^{\frac{p}{\ve}}(1+\zeta)
		\]
		which gives
		\begin{equation}\label{fund-esti}
			\pt{\fint_{B_2}\abs{\eta \nabla v}^pw_1}^{\frac1p}
			\leq C\alpha^{\frac{p}{\ve}}
			\pt{
				\pt{\fint_{B_2}\abs{v\nabla \eta}^pw_1}^{\frac1p}
				+\pt{\fint_{B_2}\abs{\eta v}^pw_1}^{\frac1p}
				+\pt{\fint_{B_2}\abs{\eta v}^pw_2}^{\frac1p}
			}.
		\end{equation}
		
		Now, by \eqref{sob-ineq_w1} and \eqref{qp-Sobolev}, that is the local Sobolev inequalities for the pair \((w_1,w_1)\) and the pair \((w_1,w_2)\) respectively we obtain
		\begin{equation}\label{fund-esti2}
			\pt{\fint_{B_2}\abs{\eta v}^{\chi_i p}w_i}^{\frac1{\chi_i p}}
			\leq C\alpha^{\frac{p}{\ve}}
			\pt{
				\pt{\fint_{B_2}\abs{v\nabla \eta}^pw_1}^{\frac1p}
				+\pt{\fint_{B_2}\abs{\eta v}^pw_1}^{\frac1p}
				+\pt{\fint_{B_2}\abs{\eta v}^pw_2}^{\frac1p}
			},
		\end{equation}
		where we recall that \(\chi_1=\frac{D_1}{D_1-p}\) and \(\chi_2=\frac{q}{p}=\frac{D}{D-p}\).
		
		To continue we consider a sequence of cut-off functions as follows: we take \(\eta_n\in C^\infty_c(B_{h_n})\) such that \(\eta_n\equiv 1\) in \(B_{h_{n+1}}\) and \(\abs{\nabla \eta_n}\leq C2^n\) where \(h_n=1+2^{-n}\). If one recalls that both weights are doubling so that \(w_i(B_{h_n})\leq \gamma_{w_i}w_i(B_{h_{n+1}})\) we deduce from \eqref{fund-esti2} that (after passing to the limit \(l\to \infty\))
		\begin{multline}\label{moser-step0}
			\pt{\fint_{B_{h_{n+1}}}\abs{\bar u}^{\alpha\chi_1 p}w_1}^{\frac1{\chi_1 p}}
			+
			\pt{\fint_{B_{h_{n+1}}}\abs{\bar u}^{\alpha\chi_2 p}w_2}^{\frac1{\chi_2 p}}
			\leq
			C2^n\alpha^{\frac{p}{\ve}}
			\left[
			\pt{\fint_{B_{h_n}}\abs{\bar u}^{\alpha p}w_1}^{\frac1p}
			\right.
			\\
			\left.+
			\pt{\fint_{B_{h_n}}\abs{\bar u}^{\alpha p}w_2}^{\frac1p}
			\right],
		\end{multline}
		which is valid for all \(\alpha\geq 1\). Recall the definition of \([u]_{s,B}\) given by \eqref{n-norm}, that is,
		\[
		[u]_{s,B}=\pt{\fint_B \abs{\bar u}^s w_1}^{\frac1{s}}+\pt{\fint_B \abs{\bar u}^s w_2}^{\frac1{s}}
		\]
		and observe that if \(\chi=\min\set{\chi_1,\chi_2}\) then
		\[
		\pt{\fint_{B_{h_{n+1}}}\abs{\bar u}^{\chi^{n+1}p}w_i}^{\frac{1}{\chi^{n+1}p}}
		\leq 
		\pt{\fint_{B_{h_{n+1}}}\abs{\bar u}^{\chi^{n}\chi_ip}w_i}^{\frac{1}{\chi^n\chi_i p}},
		\]
		for \(i=1,2\). Therefore, if we select \(\alpha_n=\chi^n>1\) in \eqref{moser-step0} we are led to
		\[
		[\bar u]_{s_{n+1},B_{h_{n+1}}}\leq C^{\chi^{-n}}2^{n\chi^{-n}}\chi^{\frac{p}{\ve}n\chi^{-n}}[\bar u]_{s_{n},B_{h_n}}
		\]
		where \(s_n=p\chi^n\). And because \(\chi>1\) then \(\sum_{k=0}^\infty k\chi^{-k}\) and \(\sum_{k=0}^\infty\chi^{-k}\) are convergent series so we can iterate the above inequality to obtain
		\[
		[\bar u]_{s_{n},B_{h_n}}\leq C[\bar u]_{p,B_2},
		\]
		for some constant \(C\) independent of \(n\). After passing to the limit \(n\to\infty\) we obtain
		\[
		\norm{u}_{L^{\infty}(B_1)}\leq C\spt{
			\pt{\fint_{B_{2}}\abs{u}^pw_1}^{\frac1p}
			+
			\pt{\fint_{B_{2}}\abs{u}^pw_2}^{\frac1p}
			+
			k
		}
		,
		\]
		and the result follows.
	\end{proof}
	
	\begin{proof}[Proof of \cref{thm-local-inte}]
		Thanks to the interpolation inequality in \(L^{s,w_i}\), it is enough to find a sequence \(s_n\sublim\limits_{n\to\infty}+\infty\) for which one has
		\[
		[\bar u]_{s_n,B_1}\leq C_n[\bar u]_{p,B_2},
		\]
		where \(\bar u=\abs{u}+k\). As in the proof of \cref{thm-local-bdd}, by using the test function \(\varphi=\eta^pG(u)\) we reach to the inequality
		\begin{multline*}
			\fint_{B_2}\abs{\eta\nabla v}^pw_1
			\leq 
			ap\fint_{B_2}\abs{v\nabla \eta}\abs{\eta\nabla v}^{p-1}w_1
			+p\alpha^{p-1}\fint_{B_2}\bar b\abs{v\nabla \eta}\abs{v\eta}^{p-1}w_1
			+\beta\alpha^{p-1}\fint_{B_2}\bar d_1\abs{v\eta}^pw_1\\
			+\fint_{B_2} cv\eta\abs{\eta\nabla v}^{p-1}w_2
			+\alpha^{p-1}\fint_{B_2}\bar d_2\abs{v\eta}^pw_2,
		\end{multline*}
		but because \(\ve=0\) we cannot repeat \eqref{gath1}-\eqref{gath4}. Instead we firstly estimate the term involving \(\bar b\) as follows
		\begin{align*}
			\fint_{B_2}\bar b\abs{v\nabla \eta}\abs{v\eta}^{p-1}w_1&\leq\pt{\fint_{B_2}\bar b^{\frac{D_1}{p-1}}w_1}^{\frac{p-1}{D_1}}\pt{\fint_{B_2}\abs{v\nabla \eta}^pw_1}^{\frac1p}\pt{\fint_{B_2}\abs{v\eta}^{\chi_1 p}w_1}^{\frac{p-1}{\chi_1 p}}\\
			&\leq C\pt{\fint_{B_2}\abs{v\nabla \eta}^pw_1}^{\frac1p}\spt{\pt{\fint_{B_2}\abs{v\nabla \eta}^{p}w_1}^{1-\frac{1}{p}}+\pt{\fint_{B_2}\abs{\eta\nabla v}^{p}w_1}^{1-\frac{1}{p}}}.
		\end{align*}
		For the terms involving \(c\) and \(\bar d\) we consider \(\bar c=c \pt{\frac{w_2}{w_1}}^{1-\frac1p}\) and for each \(M>0\) we define the set \(\mathcal C_M=\set{\bar c\leq M}\) and proceed as follows
		\begin{align*}
			\fint_{B_2} cv\eta\abs{\eta\nabla v}^{p-1}w_2&=
			\frac{1}{w_2(B_2)}
			\left[
			\int_{B_2\cap\mathcal C_M} cv\eta\abs{\eta\nabla v}^{p-1}w_2\right.\\
			&\left.\quad+
			\int_{B_2\cap\mathcal C_M^c} \bar c w_2^{\frac{1}{D}}\abs{v\eta}w_2^{\frac{1}{q}}\abs{\eta\nabla v}^{p-1}w_1^{1-\frac1p}
			\right]\\
			&\leq M
			\pt{\fint_{B_2} \abs{v\eta}^pw_2}^{\frac1p}
			\pt{\fint_{B_2} \abs{\eta\nabla v}^pw_1}^{1-\frac1p}\\
			&\quad+
			\pt{\frac{1}{w_2(B_2)}\int_{B_2\cap\mathcal C_M^c}\abs{\bar c}^{D}w_2}^{\frac1{D}}
			\pt{\fint_{B_2} \abs{v\eta}^{q}w_2}^{\frac1q}
			\pt{\fint_{B_2} \abs{\eta\nabla v}^pw_1}^{1-\frac1p}\\
			&\leq M
			\pt{\fint_{B_2} \abs{v\eta}^pw_2}^{\frac1p}
			\pt{\fint_{B_2} \abs{\eta\nabla v}^pw_1}^{1-\frac1p}\\
			&\quad+C
			\pt{\fint_{B_2}\abs{v\nabla\eta}^pw_1}^{\frac1p}
			\pt{\fint_{B_2} \abs{\eta\nabla v}^pw_1}^{1-\frac1p}\\
			&\quad+C
			\pt{\frac{1}{w_2(B_2)}\int_{B_2\cap \mathcal C_M^c}\abs{\bar c}^Dw_2}^{\frac1{D}}
			\pt{\fint_{B_2}\abs{\eta \nabla v}^pw_1}.
		\end{align*}
		Similarly
		\begin{align*}
			\fint_{B_2}\bar d_1\abs{v\eta}^pw_1&=
			\frac{1}{w_1(B_2)}\spt{
				\int_{B_2\cap\set{\bar d_1\leq M}}\bar d\abs{v\eta}^pw_1
				+
				\int_{B_2\cap\set{\bar d_1>M}}\bar d_1\abs{v\eta}^pw_1
			}
			\\
			&\leq M
			\fint_{B_2}\abs{v\eta}^pw_1
			+
			\pt{\frac{1}{w_1(B_2)}\int_{B_2\cap\set{\bar d_1>M}}\bar d_1^{\frac{D_1}{p}}w_1}^{\frac{p}{D_1}}
			\pt{\fint_{B_2}\abs{v\eta}^{\chi_1 p}w_1}^{\frac1{\chi_1 p}}\\
			&\leq M
			\fint_{B_2}\abs{v\eta}^pw_1
			+C
			\pt{\fint_{B_2}\abs{v \nabla\eta}^{p}w_1}\\
			&\quad+
			\pt{\frac{1}{w_1(B_2)}\int_{B_2\cap\set{\bar d_1>M}}\bar d_1^{\frac{D_1}{p}}w_1}^{\frac{p}{D_1}}
			\pt{\fint_{B_2}\abs{\eta \nabla v}^{p}w_1},
		\end{align*}
		and
		\begin{align*}
			\fint_{B_2}\bar d_2\abs{v\eta}^pw_2
			&=\spt{
				\frac{1}{w_2(B_2)}\int_{B_2\cap\set{\bar d_2\leq M}}\bar d_2\abs{v\eta}^pw_2
				+
				\int_{B_2\cap\set{\bar d_2>M}}\bar d_2\abs{v\eta}^pw_2
			}\\
			&\leq M
			\fint_{B_2}\abs{v\eta}^pw_2
			+
			\pt{\frac{1}{w_2(B_2)}\int_{B_2\cap\set{\bar d_2>M}}\bar d_2^{\frac{D}{p}}w_2}^{\frac{p}{D}}
			\pt{\fint_{B_2}\abs{v\eta}^{q}w_2}^{\frac1{q}}\\
			&\leq M
			\fint_{B_2}\abs{v\eta}^pw_2
			+C
			\pt{\fint_{B_2}\abs{v \nabla\eta}^{p}w_1}\\
			&\quad+
			\pt{\frac{1}{w_2(B_2)}\int_{B_2\cap\set{\bar d_2>M}}\bar d_2^{\frac{D}{p}}w_2}^{\frac{p}{D}}
			\pt{\fint_{B_2}\abs{\eta \nabla v}^{p}w_1}.
		\end{align*}
		
		Because \(\bar c\in L^{D,w_2}\), \(\bar d_1\in L^{\frac{D_1}{p},w_1}\) and \(\bar d_2\in L^{\frac{D}{p},w_2}\) then for any \(\delta>0\) we can find \(M>0\) such that
		\begin{multline*}
			\pt{\frac{1}{w_2(B_2)}\int_{B_2\cap \mathcal C_M}\abs{\bar c}^{D}w_2}^{\frac1{D}}
			+
			\pt{\frac{1}{w_1(B_2)}\int_{B_2\cap\set{\bar d_1>M}}\bar d_1^{\frac{D_1}{p}}w_1}^{\frac{p}{D_1}}\\
			+
			\pt{\frac{1}{w_2(B_2)}\int_{B_2\cap\set{\bar d_2>M}}\bar d_2^{\frac{D}{p}}w_2}^{\frac{p}{D}}
			\leq \delta,
		\end{multline*}
		therefore for any \(\alpha\geq 1\) we can find \(\delta>0\) sufficiently small and a constant \(C_\alpha>0\) such that
		\begin{multline*}
			\fint_{B_2}\abs{\eta\nabla v}^pw_1\leq
			C_\alpha
			\spt{
				\pt{\fint_{B_2}\abs{v\nabla \eta}^pw_1}^{\frac1p}
				+
				\pt{\fint_{B_2}\abs{v\eta}^pw_2}^{\frac1p}
			}
			\pt{\fint_{B_2}\abs{\eta\nabla v}^{p}w_1}^{1-\frac1p}\\
			+
			C_\alpha
			\pt{\fint_{B_2}\abs{v\nabla \eta}^pw_1}
			+C_\alpha
			\pt{\fint_{B_2}\abs{v\eta}^pw_1}
			+C_\alpha
			\pt{\fint_{B_2}\abs{v\eta}^pw_2}.
		\end{multline*}
		
		The above inequality allows us to we use \cite[Lemma 2]{Serrin1964} once again and obtain an inequality analogous to \eqref{fund-esti}, namely
		\begin{equation}\label{fund-esti4}
			\pt{\fint_{B_2}\abs{\eta \nabla v}^pw_1}^{\frac1p}
			\leq
			C_\alpha
			\spt{
				\pt{\fint_{B_2}\abs{v\nabla \eta}^pw_1}^{\frac1p}
				+
				\pt{\fint_{B_2}\abs{\eta v}^pw_1}^{\frac1p}
				+
				\pt{\fint_{B_2}\abs{\eta v}^pw_2}^{\frac1p}
			}
		\end{equation}
		the main difference being that the constant \(C_\alpha\) is no longer explicit. Nonetheless we can continue the argument from the proof of \cref{thm-local-bdd} by choosing appropriate cut-off functions \(\eta\) to reach
		\[
		[\bar u]_{s_{n+1},B_{h_{n+1}}}\leq C_n[\bar u]_{s_n,B_{h_n}},
		\]
		where \(s_n=p\chi^n\), \(h_n=1+2^{-n}\) and \([u]_{s,B}\) is defined in \eqref{n-norm}. Observe that while we do not obtain a uniform estimate for \(C_n\) we can still iterate the above to conclude that
		\[
		[\bar u]_{s_n,B_1}\leq C_n[\bar u]_{p,B_2}
		\]
		and the result is proved.
	\end{proof}
	
	\begin{proof}[Proof of \cref{harnack-thm}]
		\cref{thm-local-bdd} says that \(u\) is bounded on any compact subset of \(B_{3}\) hence for any \(\beta\in\RR\) and any \(\delta>0\) the function \(\varphi=\eta^p\bar u^{\beta}\) is a valid test function provided \(\bar u=u+k+\delta\) and \(\eta\in C^\infty_c(B_3)\). Here \(k\) is defined exactly as in \cref{thm-local-bdd}.
		
		For \(\beta=1-p\) and \(v=\log \bar u\) this is similar to what we did in \cite{Cas2023}, the main difference is the appearance of the weight \(w_2\). We obtain
		\begin{multline}\label{esti-jn-1}
			(p-1)\int_{B_3} \abs{\eta\nabla v}^pw_1\leq pa\int_{B_3} \abs{\nabla \eta}\abs{\eta\nabla v}^{p-1}w_1+p\int_{B_3} \bar b\eta^{p-1}\abs{\nabla\eta}w_1+\int_{B_3} c\eta\abs{\eta\nabla v}^{p-1}w_2\\
			+(p-1)\int_{B_3} \bar d_1\eta^pw_1+\int_{B_3} \bar d_2\eta^pw_2,
		\end{multline}
		for any \(\eta\in C^\infty_c(B_3)\). To continue denote by \(z=\pt{\int_{B_3}\abs{\eta\nabla v}^pw_1}^{\frac1p}\) and with the aid of Hölder's inequality \eqref{esti-jn-1} becomes
		\[
		z^{p}\leq C_1z^{p-1}+C_2,
		\]
		where for \(\bar c=c\pt{\frac{w_2}{w_1}}^{1-\frac1p}\) we have
		\begin{gather}
			C_1=\frac{pa}{p-1}\pt{\int_{B_3}\abs{\nabla\eta}^pw_1}^{\frac1p}+\frac1{p-1}\pt{\int_{B_3}\abs{\bar c \eta}^pw_2}^{\frac1p},\label{const-11}\\
			C_2=\frac{p}{p-1}\int_{B_3}\bar b\eta^{p-1}\abs{\nabla\eta}w_1+\int_{B_3}\bar d_1\eta^pw_1+\frac{1}{p-1}\int_{B_3}\bar d_2\eta^pw_2,\label{const-12}
		\end{gather}
		which thanks to Young's inequality imply
		\[
		z^p\leq C(C_1^p+C_2),
		\]
		for some constant \(C\). To continue we estimate \(C_1\) and \(C_2\) using appropriate \(\eta\). For any \(0<h<2\) such that \(B_h\subset B_2\) (not necessarily concentric) we have that \(B_{\frac{3h}2}\subset B_3\) and we consider \(\eta\in C^\infty_c(B_{\frac{3h}2})\) such that \(\eta\equiv 1\) in \(B_h\), \(0\leq \eta\leq 1\) and \(\abs{\nabla \eta}\leq Ch^{-1}\).
		
		We use such \(\eta\) in \eqref{const-11}-\eqref{const-12} and we get the following estimates using Hölder inequality and the properties of \(\eta\)
		\begin{equation*}
			\int_{B_3} \abs{\nabla \eta}^pw_1\leq \frac{C}{h^p}w_1(B_{\frac{3h}2}),
		\end{equation*}
		\begin{align*}
			\int_{B_3} \bar b\eta^{p-1}\abs{\nabla\eta}w_1&\leq \frac{C}{h}w_1(B_{\frac{3h}{2}})^{1-\frac{p-1}{D_1}}\pt{\int_{B_3} \abs{\bar b}^{\frac{D_1}{p-1}}w_1}^{\frac{p-1}{D_1}},
		\end{align*}
		\begin{align*}
			\int_{B_3} \abs{\bar c \eta }^pw_2
			&\leq Cw_2(B_{\frac{3h}2})^{1-\frac{(1-\ve)p}{D}}\pt{\int_{B_3} \abs{\bar c}^{\frac{D}{1-\ve}}w_2}^{\frac{(1-\ve)p}{D}},
		\end{align*}
		\begin{align*}
			\int_{B_3} \bar d_1\eta^pw_1&\leq Cw_1(B_{\frac{3h}2})^{1-\frac{p-\ve}{D_1}}\pt{\int_{B_3} \abs{\bar d_1}^{\frac{D_1}{p-\ve}}w}^{\frac{p-\ve}{D_1}},
		\end{align*}
		\begin{align*}
			\int_{B_3} \bar d_2\eta^pw_2
			&\leq Cw_2(B_{\frac{3h}2})^{1-\frac{p-\ve}{D}}\pt{\int_{B_3} \abs{\bar d_2}^{\frac{D}{p-\ve}}w}^{\frac{p-\ve}{D}}.
		\end{align*}
		Therefore one obtains
		\begin{align*}
			h^p\fint_{B_h}\abs{\nabla v}^pw_1&\leq \frac{h^p}{w_1(B_h)}\int_{B_3}\abs{\eta\nabla v}^pw_1\\
			&\leq \frac{Ch^p}{w_1(B_h)}\pt{C_1^p+C_2}\\
			&\leq C\left(
			\frac{w_1(B_{\frac{3h}{2}})}{w_1(B_h)}
			+h^{p-1}\frac{w_1(B_{\frac{3h}{2}})^{1-\frac{p-1}{D_1}}}{w_1(B_h)}
			+h^p\frac{w_1(B_{\frac{3h}2})^{1-\frac{p-\ve}{D_1}}}{w_1(B_h)}
			\right.\\
			&\left.
			\qquad
			+h^p\frac{w_2(B_{\frac{3h}2})^{1-\frac{(1-\ve)p}{D}}}{w_1(B_h)}
			+h^p\frac{w_2(B_{\frac{3h}2})^{1-\frac{p-\ve}{D}}}{w_1(B_h)}
			\right),
		\end{align*}
		where \(C\) depends on \(\int_{B_3}\abs{\bar b}^{\frac{D_1}{p-1}}w_1,\ \int_{B_3}\abs{\bar c}^{\frac{D}{1-\ve}}w_2\), \(\int_{B_3}\abs{\bar d_1}^{\frac{D_1}{p-\ve}}w_1\), and \(\int_{B_3}\abs{\bar d_2}^{\frac{D}{p-\ve}}w\). We claim that the right hand side of the above inequality is bounded independently of \(0<h\leq 2\), indeed because \(w_1\) is doubling we have
		\[
		\frac{w_1(B_{\frac{3h}{2}})}{w_1(B_h)}\leq C,
		\]
		and also because \(B_{\frac{3h}{2}}\subset B_3\) we deduce from \eqref{w-ball-estimate1} that \(Ch^{D_1}w_1(B_3)\leq w_1(B_{\frac{3h}{2}})\), hence
		\[
		h^{p-1}\frac{w_1(B_{\frac{3h}{2}})^{1-\frac{p-1}{D_1}}}{w_1(B_h)}\leq \frac{\gamma_{w_1}h^{p-1}}{w_1(B_{\frac{3h}{2}})^{\frac{p-1}{D_1}}}\leq C,
		\]
		also
		\[
		h^p\frac{w_1(B_{\frac{3h}2})^{1-\frac{p-\ve}{D_1}}}{w_1(B_h)}\leq \frac{\gamma_{w_1}h^p}{w_1(B_{\frac{3h}2})^{\frac{p-\ve}{D_1}}}\leq Ch^\ve.
		\]
		From \eqref{poincare-condition} we deduce
		\begin{align*}
			h^p\frac{w_2(B_{\frac{3h}2})^{1-\frac{(1-\ve)p}{D}}}{w_1(B_h)}
			&= h^p\frac{w_2(B_{\frac{3h}2})^{\frac{p}{q}+\ve\pt{1-\frac{p}q}}}{w_1(B_h)}\\
			&= h^p
			\pt{\frac{w_2(B_{\frac{3h}2})^{\frac{1}{q}}}{w_1(B_h)^{\frac{1}{p}}}}^p
			w_2(B_{\frac{3h}2})^{\ve\pt{1-\frac{p}q}}\\
			&\leq \gamma_{w_2}^{\frac{p}{q}} h^p
			\pt{\frac{w_2(B_{h})^{\frac{1}{q}}}{w_1(B_h)^{\frac{1}{p}}}}^p
			w_2(B_3)^{\ve\pt{1-\frac{p}q}}\\
			&\leq \gamma_{w_2}^{\frac{p}{q}} h^p
			\pt{C\pt{\frac3h}\frac{w_2(B_{3})^{\frac{1}{q}}}{w_1(B_3)^{\frac{1}{p}}}}^p
			w_2(B_3)^{\ve\pt{1-\frac{p}q}}\\
			&\leq C
		\end{align*}
		and similarly
		\[
		h^p\frac{w_2(B_{\frac{3h}2})^{1-\frac{p-\ve}{D}}}{w_1(B_h)}=h^p\pt{\frac{w_2(B_{\frac{3h}2})^{\frac{1}{q}}}{w_1(B_h)^{\frac{1}{p}}}}^pw_2(B_{\frac{3h}2})^{\frac{\ve}{p}\pt{1-\frac{p}q}}
		\leq C.
		\]
		Hence for any \(\ve\geq 0\) each term on the right hand side is bounded independently of \(0<h\leq 2\).
		
		Finally, the local Poincaré-Sobolev inequalities \eqref{poin-ineq_w1} and \eqref{qp-Poincare} tell us that
		\begin{align*}
			\fint_{B_h}\abs{v-v_{B_h}}w_i&\leq \pt{\fint_{B_h}\abs{v-v_{B_h}}^{q_i}w_i}^{\frac1{q_i}}\\
			&\leq Ch\pt{\fint_{B_h}\abs{\nabla v}^pw_1}^{\frac1p}\\
			&\leq C,
		\end{align*}
		for any ball \(B_h\subseteq B_2\) and both \(i=1, 2\). We conclude that
		\begin{equation}\label{BMO-esti}
			\fint_{B_h}\abs{v-v_{B_h}}w_i\leq C
		\end{equation}
		where \(C>0\) is a constant not depending on \(h\), in other words, \(v\in \mathrm{BMO}(B_2,w_i\dx)\). If we denote by \(\norm{v}_{BMO(B_2,w_i)}\) as the least possible \(C>0\) in \eqref{BMO-esti} then the John-Nirenberg lemma for doubling measures \cite[Appendix II]{HeKiMa2006} tells us that there exist constants \(p_{0,i},C>0\) such that
		\[
		\fint_{B}e^{p_{0,i}\abs{v-v_{B}}}w_i\leq C
		\]
		for all balls \(B\subseteq B_2\). In particular this gives
		\[
		\pt{\fint_{B_2}e^{p_{0,i}v}w_i} \cdot \pt{\fint_{B_2}e^{-p_{0,i}v}w_i} \leq C^2,
		\]
		and because \(v=\log \bar u\) we have obtained
		\[
		\fint_{B_2}\bar u^{p_{0,i}}w_i\leq C\pt{\fint_{B_2}\bar u^{-p_{0,i}}w_i}^{-1}.
		\]
		Denote by \(p_0=\min\set{p_{0,1},p_{0,2}}\) and observe that
		\[
		\fint_{B_2}\bar u^{p_{0}}w_i\leq C\pt{\fint_{B_2}\bar u^{-p_{0}}w_i}^{-1}.
		\]
		holds for both \(i=1,2\) because \(p_0\leq p_{0,i}\) and Hölder inequality. Therefore if we denote by \(\Psi(p,h)=\pt{\fint_{B_h}\bar u^pw_1}^{\frac1p}+\pt{\fint_{B_h}\bar u^pw_2}^{\frac1p}\) then the above implies
		\begin{equation}\label{esti-37}
			\Psi(p_0,2)\leq C\Psi(-p_0,2).
		\end{equation}
		
		The rest of the proof consists in using \(\varphi=\eta^p\bar u^\beta\) for \(\beta\neq 1-p,\, 0\) as test function and \(v=\bar u^\alpha\) for \(\alpha\) given by \(p\beta=p+\alpha-1\). This gives
		\begin{align*}
			\abs{\alpha}^{p}\pt{\A\cdot \nabla \varphi+\B\varphi} &\geq
			w_1\pt{\beta\abs{\eta \nabla v}^{p}-\beta \abs{\alpha}^{p}\bar d_1\abs{\eta v}^{p}}\\
			&\qquad
			-w_1\pt{ap\abs{\alpha}\abs{\nabla\eta v}\abs{\eta \nabla v}^{p-1}+p\abs{\alpha}^{p}\bar b\abs{\eta v}^{p-1}\abs{\nabla\eta v}}\\
			&\qquad
			-w_2\pt{\abs{\alpha}c\abs{\eta v}\abs{\eta \nabla v}^{p-1}+\abs{\alpha}^{p}\bar d_2\abs{\eta v}^{p-1}}
		\end{align*}
		which after integrating over \(B_3\) becomes
		\begin{align*}
			0 &\geq
			\fint_{B_3} \pt{\beta\abs{\eta \nabla v}^{p}-\beta \abs{\alpha}^{p}\bar d_1\abs{\eta v}^{p}}w_1\\
			&\qquad
			-\fint_{B_3}\pt{ap\abs{\alpha}\abs{\nabla\eta v}\abs{\eta \nabla v}^{p-1}+p\abs{\alpha}^{p}\bar b\abs{\eta v}^{p-1}\abs{\nabla\eta v}}w_1\\
			&\qquad
			-C\fint_{B_3}\pt{c\abs{\alpha}\abs{\eta v}\abs{\eta \nabla v}^{p-1}+\abs{\alpha}^{p}\bar d_2\abs{\eta v}^{p-1}}w_2
		\end{align*}
		where \(C=\frac{w_2(B_3)}{w_1(B_3)}\). Depending on \(\beta\) we have
		
		\begin{itemize}[leftmargin=*]
			\item If \(\beta>0\) then we have
			
			\begin{align*}
				\beta\fint_{B_3} \abs{\eta \nabla v}^{p}w_1 &\leq
				ap\abs{\alpha}\fint_{B_3}\abs{\nabla\eta v}\abs{\eta \nabla v}^{p-1}w_1
				+
				p\abs{\alpha}^{p}\fint_{B_3}\bar b\abs{\eta v}^{p-1}\abs{\nabla\eta v}w_1\\
				&\qquad
				+\beta \abs{\alpha}^{p}\fint_{B_3} \bar d_1\abs{\eta v}^{p}w_1
				+C\abs{\alpha}\fint_{B_3} c\abs{\eta v}\abs{\eta \nabla v}^{p-1}w_2\\
				&\qquad
				+C\abs{\alpha}^{p}\fint_{B_3}\bar d_2\abs{\eta v}^{p-1}w_2
			\end{align*}
			and if we proceed as in the proof of \cref{thm-local-bdd} to estimate each integral on the right hand side we obtain
			\begin{equation*}
				\pt{\fint_{B_3} \abs{\eta \nabla v}^{\chi_i p}w_i}^{\frac1{\chi_i p}}
				\leq C\alpha^{\frac{p}{\ve}}(1+\beta^{-1})^{\frac1\ve}
				\spt{
					\pt{\fint_{B_3} \abs{\eta v}^pw_1}^{\frac1p}
					+\pt{\fint_{B_3} \abs{\eta v}^pw_2}^{\frac1p}
					+\pt{\fint_{B_3} \abs{\nabla\eta v}^pw_1}^{\frac1p}
				}.
			\end{equation*}
			If \(\eta\in C^\infty_c(B_h)\) is such that \(\eta\equiv 1\) in \(B_{h'}\) for \(1\leq h'<h\leq 2\) with \(\abs{\nabla \eta}\leq C(h-h')^{-1}\) then
			\begin{multline*}
				\pt{\fint_{B_{h'}} \abs{v}^{\chi_i p}w_i}^{\frac1{\chi_i p}}
				\leq C\pt{\frac{w_i(B_3)}{w_i(B_{h'})}}^{\frac1{\chi_i p}}\frac{\alpha^{\frac{p}{\ve}}(1+\beta^{-1})^{\frac1\ve}}{h-h'}\\
				\times \spt{\pt{\frac{w_1(B_{h})}{w_1(B_3)}}^{\frac1{p}}\fint_{B_h} \abs{v}^pw_1+\pt{\frac{w_2(B_{h})}{w_2(B_3)}}^{\frac1{p}}\fint_{B_h} \abs{v}^pw_2}^{\frac1p},
			\end{multline*}
			but since \(1\leq h'<h\leq 2\) we have
			\[
			\frac{w_i(B_3)}{w_i(B_{h'})}\leq \frac{w_i(B_{4h'})}{w_i(B_{h'})}\leq \gamma_{w_i}^2\quad \text{and}\quad \frac{w_i(B_h)}{w_i(B_{3})}\leq 1
			\]
			hence for \(\chi=\min\set{\chi_1,\chi_2}\) we have
			\begin{equation}\label{esti-38}
				\Psi(\chi p,h')
				\leq C\frac{\alpha^{\frac{p}{\ve}}(1+\beta^{-1})^{\frac1\ve}}{h-h'}\Psi(p,h).
			\end{equation}
			
			\item Similarly, for \(1-p<\beta<0\) one has
			\begin{equation}\label{esti-39}
				\Psi(\chi p,h')
				\leq C\frac{(1-\beta^{-1})^{\frac1\ve}}{h-h'}\Psi(p,h).
			\end{equation}
			\item If \(\beta<1-p\) then one obtains
			\begin{equation}\label{esti-40}
				\Psi(\chi p',h')
				\leq C\frac{(1+\abs{\alpha})^{\frac{p}\ve}}{h-h'}\Psi(p,h).
			\end{equation}
		\end{itemize}
		
		If we observe that \(\Psi(s,r)\sublim\limits_{s\to\infty} 2\max\limits_{B_r}\bar u\) and \(\Psi(s,r)\sublim\limits_{s\to-\infty} 2\min\limits_{B_r}\bar u\) then we can repeat the iterative argument from the proof of \cite[Theorem 5]{Serrin1964} to deduce that \eqref{esti-38} and \eqref{esti-39} imply
		\[
		\max_{B_1}\bar u\leq C\Psi(p_0',2)
		\]
		for some \(p_0'\leq p_0\) chosen appropriately, whereas \eqref{esti-40} will give
		\[
		\min_{B_1}\bar u\geq C^{-1}\Psi(-p_0,2).
		\]
		Finally we can use \eqref{esti-37} to obtain a constant \(C>0\) depending on the structural parameters such that
		\[
		\max_{B_1}\bar u\leq C\min_{B_1}\bar u
		\]
		and because \(\bar u=u+k+\delta\) we conclude by letting \(\delta\to 0^+\).
	\end{proof}

	\section{Behavior at infinity}\label{sec-infty}
	
	In this section we obtain a decay estimate for weak solutions to the equation
	\begin{equation}\label{extremal-eq}
		\left\{
		\begin{aligned}
			-\di(w_1\abs{\nabla u}^{p-2}\nabla u)&=w_2\abs{u}^{q-2}u&&\text{in }\Omega\\
			u&\in D^{1,p,w_1}(\Omega)&&
		\end{aligned}
		\right.
	\end{equation}
	where the set \(\Omega\subseteq\RR^N\) (bounded or not) is such that there exists a constant  \(C>0\) for which the global weighted Sobolev inequalities \eqref{sob-ineq-w_1} and \eqref{sob-ineq-w_2} hold. With the aid of the results regarding the equation \(\di\A =\B\) we are able to prove that that weak solutions to \eqref{extremal-eq} are locally bounded.
	
	\begin{lemma}\label{ext-bd-local}
		Let \(u\in D^{1,p,w}(\Omega)\) be a weak solution of
		\[
		-\di(w_1\abs{\nabla u}^{p-2}\nabla u)=w_2\abs{u}^{q-2}u\qquad\text{in }\Omega.
		\]
		Then for every \(R>0\) such that \(B_{4R}(x_0)\subseteq\Omega\) then there exists \(C_R>0\) such that
		\[
		\norm{u}_{L^{\infty}(B_R(x_0))} \leq C_R[u]_{p, B_{4R}(x_0)}.
		\]
	\end{lemma}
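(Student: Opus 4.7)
The plan is to reduce the equation to the framework of $\di\A=\B$ studied in \cref{thm-local-bdd,thm-local-inte} and then apply these results in a two-step bootstrap. I would set
\[
\A(x,u,z)=w_1(x)\abs{z}^{p-2}z,\qquad \B(x,u,z)=-w_2(x)\abs{u}^{q-2}u,
\]
so that \eqref{extremal-eq} becomes \(\di\A=\B\). Conditions \eqref{hyp-A1} and \eqref{hyp-A2} are then satisfied with \(a=1\) and \(b=d_1=e=g\equiv 0\). For \eqref{hyp-B} I take \(c=f\equiv 0\) and \(d_2(x)=\abs{u(x)}^{q-p}\), which is legitimate since the only job of \(d_2\) is to absorb the nonlinearity \(\abs{u}^{q-1}=d_2\abs{u}^{p-1}\).

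Next I would verify the integrability condition \eqref{struc-hyp} in the endpoint form \(\ve=0\). Since \(b,c,d_1,e,f,g\) all vanish, the only nontrivial requirement is \(d_2\in L^{D/p,w_2}(B_{4R}(x_0))\), which amounts to
\[
\int_{B_{4R}(x_0)}\abs{u}^{(q-p)\frac{D}{p}}w_2\dx=\int_{B_{4R}(x_0)}\abs{u}^{q}w_2\dx<\infty,
\]
using the identity \((q-p)D/p=q\). This finiteness is provided by the global Sobolev inequality \eqref{sob-ineq-w_2} together with \(u\in D^{1,p,w_1}(\Omega)\). A standard rescaling (as indicated in the remark after \cref{thm-local-bdd}) from \(B_1\subset B_2\) to \(B_{2R}(x_0)\subset B_{4R}(x_0)\) allows one to apply \cref{thm-local-inte}, yielding
\[
[u]_{s,B_{2R}(x_0)}\leq C_{s,R}\,[u]_{p,B_{4R}(x_0)}\qquad\text{for every }s>p.
\]

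With this higher integrability in hand, I would then fix some \(0<\ve<1\) (say \(\ve=1/2\)) and re-check \eqref{struc-hyp} on \(B_{2R}(x_0)\): since now \(u\in L^{s,w_2}(B_{2R}(x_0))\) for every finite \(s\), in particular for \(s=(q-p)D/(p-\ve)\), the function \(d_2=\abs{u}^{q-p}\) lies in \(L^{D/(p-\ve),w_2}(B_{2R}(x_0))\). After another rescaling from \(B_1\subset B_2\) to \(B_R(x_0)\subset B_{2R}(x_0)\), \cref{thm-local-bdd} yields
\[
\norm{u}_{L^{\infty}(B_R(x_0))}\leq C_R\bigl([u]_{p,B_{2R}(x_0)}+k\bigr),
\]
where \(k=0\) because \(e=f=g\equiv 0\). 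Chaining with the previous inequality gives the claimed bound.

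The only real obstacle is the circularity in treating the right-hand side \(w_2\abs{u}^{q-2}u\) as a lower-order term: at the critical exponent one cannot absorb it directly into \eqref{struc-hyp} with \(\ve>0\), so the global Sobolev embedding is used as the crucial ingredient to first gain the endpoint integrability \(u\in L^{q,w_2}\), and only then does the two-step bootstrap through \cref{thm-local-inte} and \cref{thm-local-bdd} close. Scaling between concentric balls and keeping track of the \(R\)-dependence of the constants is routine and follows the template in \cite{Cas2023}.
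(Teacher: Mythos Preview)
Your proposal is correct and follows essentially the same two-step bootstrap as the paper: first cast the equation as \(\di\A=\B\) with \(d_2=\abs{u}^{q-p}\) and all other coefficients zero, verify \eqref{struc-hyp} at \(\ve=0\) using \(u\in L^{q,w_2}\), apply \cref{thm-local-inte} to gain arbitrary \(L^{s,w_2}\) integrability on \(B_{2R}\), then re-verify \eqref{struc-hyp} with some \(\ve>0\) and conclude via \cref{thm-local-bdd}. The only cosmetic difference is that you invoke the global Sobolev inequality \eqref{sob-ineq-w_2} to get \(u\in L^{q,w_2}\), whereas the paper cites the local inequality \eqref{qp-Sobolev}; either suffices here.
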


	\begin{proof}
		Observe that equation \eqref{extremal-eq} can be written in the from \(\di\A=\B\) for \(a=1\), \(b=c=d_1=e=f=g=0\) and \(d_2=-\abs{u}^{q-p}\). We first use \cref{thm-local-inte} because from that result we know that if \(d_2 \in L^{\frac{D}{p},w_2}\) then for every \(s\geq 1\) and \(R>0\) the weak solution \(u\) satisfies 
		\begin{align*}
			\pt{\fint_{B_{2R}(x_0)}\abs{u}^{s}w_1}^{\frac1s}
			+
			\pt{\fint_{B_{2R}(x_0)}\abs{u}^{s}w_2}^{\frac1s}
			&\leq C_{R,s}
			\spt{
				\pt{\fint_{B_{4R}(x_0)}\abs{u}^pw_1}^{\frac1p}
				+
				\pt{\fint_{B_{4R}(x_0)}\abs{u}^pw_2}^{\frac1p}
			},
		\end{align*}
		and \(C_{R,s}\) depends on \(s\) and on \(\pt{\fint_{B_{4R}(x_0)}\abs{d_2}^{\frac{D}{p}}w_2}^{\frac{p}D}\). But because \(u\in D^{1,p,w_1}(\Omega)\) and the weights \(w_1,w_2\) verify \eqref{poincare-condition} then the local Sobolev inequality \eqref{qp-Sobolev} holds and we have that \(u\in L^{q,w_2}(\Omega)\), hence \(d\in L^{\frac{D}{p},w_2}(B_{4R}(x_0))\Leftrightarrow q=\frac{Dp}{D-p}\). In particular, this shows that \(u\in L^{s,w_2}(B_{2R}(x_0))\) for every \(s\) and as a consequence \(d_2=-\abs{u}^{q-p}\in L^{\frac{D}{p-\ve},w_2}(B_{2R}(x_0))\) for every \(0<\ve<p\). Therefore we can now use \cref{thm-local-bdd} to conclude that 
		\[
		\norm{u}_{L^{\infty}(B_R(x_0))}\leq C_{R}[u]_{p, B_{4R}(x_0)},
		\]
		where \(C_R\) depends on \(R>0\) and the norm of \(u\) in \(D^{1,p,w_1}(\Omega)\).
	\end{proof}
	
	Now we would like to estimate the decay of the \(L^{q_1,w_1}\) norm of weak solutions as one leaves the set \(\Omega\).
	
	\begin{lemma}\label{lem-tau}
		Suppose \(u\in D^{1,p,w_1}(\Omega)\) is a weak solution of \eqref{extremal-eq}, then there exists \(R_0>0\) and \(\tau>0\) such that if \(R\geq R_0\) then
		\[
		\norm{u}_{L^{q_1,w_1}(\Omega\setminus B_{R})}\leq \pt{\frac{R_0}{R}}^{\tau}\norm{u}_{L^{q_1,w_1}(\Omega\setminus B_{R_0})}.
		\]
		Here \(B_R\) denotes an arbitrary ball of radius \(R\).
	\end{lemma}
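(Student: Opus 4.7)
The plan is to test the equation with $\varphi = \eta^p u$ for a cutoff vanishing inside $B_R$, combine the two Sobolev inequalities \eqref{sob-ineq-w_1} and \eqref{sob-ineq-w_2} with a Hölder trick that exploits criticality, and then run a hole-filling iteration on dyadic shells. More precisely, for $R \geq R_0$ (with $R_0$ to be fixed later) I would pick $\eta \in C^\infty_c(\Omega)$ satisfying $\eta \equiv 0$ on $B_R$, $\eta \equiv 1$ on $\Omega\setminus B_{2R}$ and $\abs{\nabla \eta}\leq C/R$. Plugging $\varphi = \eta^p u$ into the weak form of \eqref{extremal-eq} and absorbing the mixed gradient term via $\ve$-Young gives
\[
\int w_1 \abs{\nabla u}^p \eta^p \leq C \int w_1 \abs{u\nabla\eta}^p + C \int w_2 \abs{u}^q \eta^p.
\]

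The critical right-hand side is treated as follows: write $\abs{u}^q \eta^p = \abs{u}^{q-p}\abs{u\eta}^p$ and apply Hölder with exponents $q/(q-p)$ and $q/p$ to obtain a factor $\pt{\int_{\Omega\setminus B_R} w_2 \abs{u}^q}^{(q-p)/q}$. Since $u \in L^{q,w_2}(\Omega)$ (by the global Sobolev embedding \eqref{sob-ineq-w_2}), this factor can be made arbitrarily small by taking $R_0$ large, after which \eqref{sob-ineq-w_2} applied to $u\eta$ combined with Young allows the resulting quantity to be absorbed into the left-hand side, leaving
\[
\int w_1 \abs{\nabla u}^p \eta^p \leq C \int w_1 \abs{u\nabla\eta}^p.
\]
Feeding this into \eqref{sob-ineq-w_1} applied to $u\eta$ and using $\abs{\nabla\eta}\leq C/R$ yields
\[
\pt{\int_{\Omega\setminus B_{2R}} w_1 \abs{u}^{q_1}}^{p/q_1} \leq \frac{C}{R^p}\int_{B_{2R}\setminus B_R} w_1 \abs{u}^p.
\]

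For the last step I would apply Hölder with exponents $q_1/p$ and $q_1/(q_1-p)$ on the annulus together with the doubling bound $w_1(B_{2R})\leq CR^{D_1}$ from \eqref{w-ball-estimate1}; since $1-p/q_1 = p/D_1$, the volume factor $w_1(B_{2R})^{p/D_1}$ cancels the $R^{-p}$ in front. Writing $g(R) := \int_{\Omega\setminus B_R} w_1 \abs{u}^{q_1}$, this gives $g(2R)^{p/q_1} \leq C\pt{g(R)-g(2R)}^{p/q_1}$, which rearranges to the hole-filling contraction $g(2R)\leq \theta\, g(R)$ with $\theta = C^{q_1/p}/(1+C^{q_1/p}) \in (0,1)$, valid uniformly for $R \geq R_0$. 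Iterating dyadically yields $g(R)\leq M(R_0/R)^{q_1\tau} g(R_0)$ for $\tau = \log_2(1/\theta)/q_1 > 0$, and a slight enlargement of $R_0$ absorbs the constant $M$. Taking $q_1$-th roots gives the claimed inequality. The main obstacle is the critical absorption in the second paragraph: without the smallness of the $L^{q,w_2}$-tail of $u$ the term $\int w_2 \abs{u}^q\eta^p$ could not be controlled by the gradient, and it is precisely this smallness—available because $q$ is the critical Sobolev exponent for the embedding $D^{1,p,w_1}\hookrightarrow L^{q,w_2}$—that forces the threshold $R \geq R_0$.
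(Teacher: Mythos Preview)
Your proposal is correct and follows essentially the same route as the paper's proof: test with $\eta^p u$, absorb the critical term via the smallness of the $L^{q,w_2}$-tail and \eqref{sob-ineq-w_2}, apply \eqref{sob-ineq-w_1}, use H\"older plus the doubling bound \eqref{w-ball-estimate1} to cancel the $R^{-p}$, and finish with hole-filling and dyadic iteration. The only slip is that you wrote $\eta\in C^\infty_c(\Omega)$ while also requiring $\eta\equiv 1$ on $\Omega\setminus B_{2R}$; as in the paper you should take $\eta\in W^{1,\infty}(\RR^N)$, which still makes $\eta^p u$ an admissible test function since $u\in D^{1,p,w_1}(\Omega)$.
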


	\begin{proof}
		Because \(u\in D^{1,p,w}(\Omega )\) then for \(\eta\in W^{1,\infty}(\RR^N)\) the function \(\varphi=\eta^p u\) is a valid test function in
		\[
		\int_\Omega \abs{\nabla u}^{p-2}\nabla u\nabla\varphi w_1 = \int_\Omega \abs{u}^{q-2}u\varphi w_2.
		\]
		On the one hand, using Young's inequality we can find \(C_p>0\) such that
		\begin{align*}
			\int_\Omega \abs{\nabla u}^{p-2}\nabla u\nabla\varphi w_1
			&=
			\int_\Omega \abs{\eta\nabla u}^{p} w_1
			+
			p\int_\Omega \eta^{p-1}\abs{\nabla u}^{p-2}\nabla u\cdot u\nabla\eta w_1 \\
			&\geq \frac12
			\int_\Omega \abs{\eta\nabla u}^{p} w_1
			-C_p
			\int_\Omega \abs{u\nabla\eta}^p w_1.
		\end{align*}
		
		On the other hand, since \(q>p\) we can write
		\begin{align*}
			\int_\Omega \abs{u}^{q-2}u\varphi w_2
			&=\int_\Omega u^{q}\eta^p w_2\\
			&=\int_\Omega \abs{u}^{q-p}\abs{\eta u}^p w_2\\
			&\leq
			\pt{\int_{\supp \eta} \abs{u}^{q} w_2}^{1-\frac{p}{q}}
			\pt{\int_\Omega \abs{\eta u}^{q} w_2}^{\frac{p}{q}}.
		\end{align*}
		
		Hence 
		\begin{align*}
			\int_\Omega \abs{\nabla(\eta u)}^{p} w_1
			&=\int_\Omega \abs{\eta\nabla u+u\nabla \eta}^{p} w_1\\
			&\leq 2^{p-1}
			\int_\Omega \abs{\eta\nabla u}^{p} w_1
			+2^{p-1}
			\int_\Omega \abs{u\nabla \eta}^{p} w_1\\
			&\leq 2^{p-1}\pt{
				2\int_\Omega \abs{\nabla u}^{p-2}\nabla u\nabla\varphi w_1
				+C_p
				\int_\Omega \abs{u\nabla\eta}^p w_1
			}
			+2^{p-1}
			\int_\Omega \abs{u\nabla \eta}^{p} w_1\\
			&\leq C_p
			\int_\Omega \abs{u\nabla\eta}^p w_1
			+2^p
			\pt{\int_{\supp \eta} \abs{u}^{q} w_2 }^{1-\frac{p}{q}}
			\pt{\int_\Omega \abs{\eta u}^{q} w_2}^{\frac{p}{q}},\\
		\end{align*}
		and the global Sobolev inequality \eqref{sob-ineq-w_2} tells us that there exists a constant \(C_{p,w_1,w_2}>0\) such that
		\begin{equation}\label{p*bd1}
			\int_\Omega \abs{\nabla(\eta u)}^{p} w_1\leq C_p
			\int_\Omega \abs{u\nabla\eta}^p w_1
			+C_{p,w_1,w_2}
			\pt{\int_{\supp \eta} \abs{u}^{q} w_2 }^{1-\frac{p}{q}}
			\pt{\int_\Omega \abs{\nabla(\eta u)}^{p} w_1}.
		\end{equation}
		
		We now choose \(\eta\). First of all, because \(\norm{u}_{q,w_2}\) is finite for any given \(\ve>0\) we can find \(R_0=R_0(\ve)>0\) such that if \(R\geq R_0\) then 
		\[
		\int_{\Omega\setminus B_{R}}\abs{u}^{q} w_2 \leq \ve.
		\]
		With this in mind we choose \(R_0>0\) such that 
		\[
		C_{p,w_1,w_2}\pt{\int_{\Omega\setminus B_{R_0}}\abs{u}^{q} w_2 }^{1-\frac{p}q}\leq \frac{1}{2},
		\]
		and we suppose that \(R\geq R_0\) from now on. We consider \(\eta\in W^{1,\infty}(\RR^N)\), such that \(0\leq \eta\leq 1\), \(\eta(x)=0\) for \(x\in B_{R}\), \(\eta(x)=1\) for \(x\notin B_{2R}\), and \(\abs{\nabla\eta}\leq CR^{-1}\). If we use such \(\eta\) in \eqref{p*bd1} we obtain a constant \(C>0\) independent of \(R\) such that
		\[
		\int_\Omega \abs{\nabla(\eta u)}^{p} w_1\leq C_p
		\int_\Omega \abs{u\nabla\eta}^p w_1
		\]
		which after using \eqref{sob-ineq-w_1} gives
		\begin{equation}\label{p*bd2}
			\pt{\int_\Omega \abs{\eta u}^{q_1} w_1}^{\frac{1}{q_1}}\leq C\pt{\int_\Omega \abs{u\nabla\eta}^p w_1}^{\frac1{p}}.
		\end{equation}
		
		By the choice of \(\eta\) we also have
		\begin{align}
			\int_\Omega \abs{u\nabla\eta}^p w_1
			&\leq CR^{-p}
			\int_{\Omega\cap B_{2R}\setminus B_{R}} \abs{u}^p w_1 \notag\\
			&\leq CR^{-p}
			\pt{w_1(\Omega\cap B_{2R})}^{1-\frac{1}{\chi_1}}
			\pt{\int_{\Omega\cap B_{2R}\setminus B_{R}} \abs{u}^{q_1} w_1}^{\frac{1}{\chi_1}}\notag\\
			&\leq CR^{-p}
			\pt{
				w_1(\Omega\cap B_{R_0})
				\pt{\frac{2R}{R_0}}^{D_1}}^{1-\frac{1}{\chi_1}
			}
			\pt{\int_{\Omega\cap B_{2R}\setminus B_{R}} \abs{u}^{q_1} w_1}^{\frac{1}{\chi_1}}\notag\\
			&=C
			\pt{\frac{w_1(\Omega\cap B_{R_0})}{R_0^{D_1}}}^{1-\frac{1}{\chi_1}}R^{D_1(1-\frac{1}{\chi_1})-p}
			\pt{\int_{\Omega\cap B_{2R}\setminus B_{R}} \abs{u}^{q_1} w_1}^{\frac{1}{\chi_1}}\notag\\
			&\leq CR^{D_1(1-\frac{1}{\chi_1})-p}
			\pt{\int_{\Omega\cap B_{2R}\setminus B_{R}} \abs{u}^{q_1} w_1}^{\frac{1}{\chi_1}} \notag\\
			&= C
			\pt{\int_{\Omega\cap B_{2R}\setminus B_{R}} \abs{u}^{q_1} w_1}^{\frac{1}{\chi_1}} \label{decay1}
		\end{align}
		where we have used \eqref{w-ball-estimate1} and the fact that \(\frac1{q_1}=\frac1{D_1}-\frac1p\). From \eqref{p*bd2} and \eqref{decay1} we obtain
		\[
		\int_\Omega \abs{\eta u}^{q_1} w_1 \leq C\int_{\Omega\cap B_{2R}\setminus B_{R}} \abs{u}^{q_1} w_1,
		\]
		for some constant \(C>0\) depending on \(p,q_1, R_0\) but independent of \(R\). To continue, observe that since \(\eta\equiv 1\) on \(B_{2R}^c\) we can write
		\begin{align*}
			\int_{\Omega\setminus B_{2R}}\abs{u}^{q_1} w_1
			&\leq \int_{\Omega}\abs{\eta u}^{q_1} w_1\\
			&\leq C
			\int_{\Omega\cap B_{2R}\setminus B_{R}} \abs{u}^{q_1}w_1\\
			&=C
			\int_{\Omega\setminus B_{R}} \abs{u}^{q_1}w_1
			-C
			\int_{\Omega\setminus B_{2R}} \abs{u}^{q_1} w_1,
		\end{align*}
		thus, if \(\theta=\frac{C}{C+1}\in(0,1)\) then we obtain
		\begin{equation*}\label{bdf}
			\int_{\Omega\setminus B_{2R}}\abs{u}^{q_1} w_1 \leq \theta\int_{\Omega\setminus B_{R}} \abs{u}^{q_1} w_1.
		\end{equation*}
		
		Then just as in \cite{Cas2023} one can find \(\tau>0\) such that
		\[
		\int_{\Omega\setminus B_{R}}\abs{u}^{q_1} w_1\leq \pt{\frac{R_0}{R}}^{\tau}\int_{\Omega\setminus B_{R_0}}\abs{u}^{q_1} w_1
		\]
		for \(\tau=-q_1\log_2\theta>0\).
	\end{proof}
	
	\begin{lemma}\label{lema-tau2}
		Suppose that \(u\in D^{1,p,w_1}(\Omega)\) is a weak solution of
		\begin{equation}\label{eq-tau2}
			-\di(w_1\abs{\nabla u}^{p-2}\nabla u)=w_2\abs{u}^{q-2}u\qquad\text{in }\Omega.
		\end{equation}
		Then for each \(s>\max\set{q_1,q}\) there exists \(R_0>0\) (depending on \(s\)) such that if \(R\geq R_0\) then there exists \(C=C(p,q_1,q,w_1,w_2;s)>0\) for which 
		\[
		\norm{u}_{L^{s,w_i}(\Omega\setminus B_{2R})}\leq \frac{C}{R^{\frac{p}{q_1-p}-o_s(1)}}\norm{u}_{L^{q_1,w_1}(\Omega\setminus B_{R})},
		\]
		for both \(i=1,2\), where \(o_s(1)\) is a quantity that goes to \(0\) as \(s\to \infty\).
	\end{lemma}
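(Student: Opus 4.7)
The plan is to bootstrap the $L^{q_1,w_1}$ decay of \cref{lem-tau} via a Moser-type iteration on the exterior annular region, exploiting the fact that $u\in L^{q,w_2}(\Omega)$ makes the nonlinear right-hand side $w_2|u|^{q-2}u$ act as a small perturbation once $R$ is large. Given $s>\max\{q_1,q\}$ I choose $R_0$ so large that $\int_{\Omega\setminus B_{R_0}}|u|^q w_2\leq\epsilon$, with $\epsilon>0$ to be selected below, and for any $R\geq R_0$ I take a Lipschitz cutoff $\eta$ vanishing on $B_R$, equal to $1$ on $\Omega\setminus B_{2R}$ with $|\nabla\eta|\leq C/R$. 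Testing \eqref{eq-tau2} against $\varphi=\eta^p G(u)$ using the truncation functions $F,G$ from the beginning of \cref{sec-apriori} and writing $v=F(\bar u)$, the divergence term reproduces the Caccioppoli estimate from the proof of \cref{thm-local-bdd} (in the degenerate case $b=c=d_1=e=f=g=0$, $a=1$), while the right-hand side $\B\varphi=\eta^p G(u)|u|^{q-2}u\,w_2$ is controlled via Hölder with exponents $q/p$ and $q/(q-p)$ by
\[
C\alpha^{p-1}\Bigl(\int_{\Omega\setminus B_R}|u|^q w_2\Bigr)^{1-p/q}\Bigl(\int|\eta v|^q w_2\Bigr)^{p/q}\leq C\alpha^{p-1}\epsilon^{1-p/q}\Bigl(\int|\eta v|^q w_2\Bigr)^{p/q}.
\]

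Applying the global Sobolev inequality \eqref{sob-ineq-w_2} to $\eta v$ bounds the last factor by $C\bigl(\int|\eta\nabla v|^p w_1+\int|v\nabla\eta|^p w_1\bigr)$, so choosing $\epsilon$ small enough (depending only on $p,q,w_1,w_2$) allows me to absorb $\int|\eta\nabla v|^p w_1$ into the left-hand side and arrive at the pure Caccioppoli estimate $\int|\eta\nabla v|^p w_1\leq C\alpha^p\int|v\nabla\eta|^p w_1$. Feeding this into \eqref{sob-ineq-w_1} and \eqref{sob-ineq-w_2} yields, for $i=1,2$,
\[
\Bigl(\int|\eta v|^{\chi_i p}w_i\Bigr)^{1/(\chi_i p)}\leq C\alpha\Bigl(\int|v\nabla\eta|^p w_1\Bigr)^{1/p}.
\]
I iterate the $i=1$ case with $v=\bar u^{\alpha_n}$, $\alpha_0=q_1/p$, $\alpha_{n+1}=\chi_1\alpha_n$, using cutoffs $\eta_n$ that vanish on $B_{\rho_n}$ and equal $1$ outside $B_{\rho_{n+1}}$ with $|\nabla\eta_n|\leq C\,2^{n+1}/R$, for $\rho_n=R(2-2^{-n})$. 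Raising to the power $1/\alpha_n$ at each step and telescoping gives
\[
\|\bar u\|_{L^{p\alpha_n,w_1}(\Omega\setminus B_{2R})}\leq\prod_{k=0}^{n-1}\bigl(C\,\chi_1^k\,2^k\,R^{-1}\bigr)^{1/\alpha_k}\|\bar u\|_{L^{q_1,w_1}(\Omega\setminus B_R)},
\]
and using the $i=2$ variant at the final step yields the analogous bound with $L^{\chi_2 p\alpha_{n-1},w_2}$ on the left.

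Because $\chi_1=q_1/p$ and $\alpha_k=(q_1/p)\chi_1^k$, the geometric series $\sum_{k\geq 0}1/\alpha_k=(p/q_1)\chi_1/(\chi_1-1)=p/(q_1-p)$ converges, so the cumulative power of $R^{-1}$ produced by the product tends to exactly $p/(q_1-p)$ as $n\to\infty$, while the product of the $\chi_1^k\,2^k$ factors stays finite since $\sum k\chi_1^{-k}<\infty$. For a prescribed $s$ I stop at the least $n$ with $p\alpha_n\geq s$ (resp.\ $\chi_2 p\alpha_{n-1}\geq s$ for the $w_2$ bound), so that $n\sim\log_{\chi_1}(s/q_1)$ and the tail $\sum_{k\geq n}1/\alpha_k=(p/(q_1-p))\chi_1^{-n}=O(q_1/s)$ produces the $o_s(1)$ loss in the exponent of $R$. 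The main obstacle is the joint bookkeeping of the two weighted norms through the iteration: I resolve this by always using \eqref{sob-ineq-w_1} to propagate the exponent $p\alpha_n\mapsto q_1\alpha_n=p\alpha_{n+1}$ and invoking \eqref{sob-ineq-w_2} only at the last step to extract $L^{s,w_2}$ control; admissibility of $\varphi=\eta^p G(u)$ throughout is guaranteed by \cref{ext-bd-local} together with the standard limiting procedure $l\to\infty$ in $F_{\alpha,k,l}$.
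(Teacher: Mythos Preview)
Your approach mirrors the paper's: test with $\varphi=\eta^pG(u)$ on an exterior region, absorb the nonlinear term via smallness of $\int_{\Omega\setminus B_R}|u|^q w_2$, derive a Caccioppoli inequality, and Moser-iterate on nested exteriors $\Omega\setminus B_{\rho_n}$ using \eqref{sob-ineq-w_1}, invoking \eqref{sob-ineq-w_2} only once at the end to extract the $w_2$-norm. The geometric-series bookkeeping for the accumulated power of $R^{-1}$ is identical to the paper's.

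There is, however, one genuine slip in the absorption step. You assert that $\epsilon$ can be taken ``depending only on $p,q,w_1,w_2$'', but the quantity you need to absorb is $C\alpha^{p-1}\epsilon^{1-p/q}\int|\eta\nabla v|^p w_1$, and moving it to the left forces $C\alpha^{p-1}\epsilon^{1-p/q}<1$. Since your iteration runs through $\alpha_n=\chi_1^{n+1}\to\infty$, no single $\epsilon$ can serve all steps; as written the absorption fails once $\alpha$ is large. The paper resolves this exactly as the statement of the lemma suggests, by taking $R_0=R_0(\alpha)$: for a prescribed $s$ only finitely many $\alpha_0,\dots,\alpha_{n(s)}$ are needed, and one chooses $\epsilon$ (hence $R_0$) small enough for the largest of them, so that $R_0$ depends on $s$. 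With that correction your argument is complete and coincides with the paper's; incidentally, once $\epsilon$ is tied to $\alpha$ in this way the Caccioppoli constant you obtain is actually $O(1)$ in $\alpha$ rather than $C\alpha^p$, though the weaker bound you state is harmless for the iteration.
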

	
	\begin{proof}
		Firstly notice that thanks to the \(L^{s,w}\) interpolation inequality it is enough to exhibit a sequence \(s_n\sublim\limits_{n\to\infty}+\infty\) for which one has
		\[
		\norm{u}_{L^{s_n,w_i}(\Omega\setminus B_{2R})}\leq \frac{C}{R^{\frac{p}{q_1-p}-o_n(1)}}\norm{u}_{L^{q_1,w_1}(\Omega\setminus B_{R})}.
		\]
		Observe that in the context of \eqref{model-eq} we can view \eqref{eq-tau2} as \(\di\A=\B\) where \(a=1\), \(b=c=d_1=e=f=g=0\) and \(d_2=\bar d_2=-\abs{u}^{q-p}\). The assumption \(u\in D^{1,p,w_1}(\Omega)\) tells us that \(\varphi=\eta^pG(u)\) is valid test function and we can follow the notation of the proof \cref{thm-local-bdd}, in fact, since \(e=f=g=0\) we can further suppose that \(k>0\) is arbitrary in the definition of both \(F\) and \(G\). Starting with \eqref{bas-esti0} we now integrate over \(\Omega\) to obtain
		\[
		\int_{\Omega}\abs{\eta\nabla v}^pw_1\leq p\int_{\Omega}\abs{v\nabla \eta}\abs{\eta\nabla v}^{p-1}w_1+(\alpha-1)\alpha^{p-1}\int_{\Omega}d_2\abs{v\eta}^pw_2,
		\]
		where \(v=F(\bar u)\). From the above we obtain
		\[
		\int_{\Omega}\abs{\nabla(\eta v)}^pw_1\leq C_\alpha\pt{\int_{\Omega}\abs{v\nabla \eta}^pw_1+\int_{\Omega}\abs{u}^{q-p}\abs{v\eta}^pw_2},
		\]
		and with the help of \eqref{sob-ineq-w_2} we can write 
		\begin{align*}
			\int_{\Omega}\abs{u}^{q-p}\abs{v\eta}^pw_2
			&\leq
			\pt{\int_{\supp \eta} \abs{u}^{q} w_2}^{1-\frac{p}{q}}
			\pt{\int_\Omega \abs{v\eta}^{q} w_2}^{\frac{p}{q}}\\
			&\leq
			C_{p,w_1,w_2}\pt{\int_{\supp \eta} \abs{u}^{q} w_2}^{1-\frac{p}{q}}
			\pt{\int_\Omega \abs{\nabla(v\eta)}^{p} w_1},
		\end{align*}
		therefore we have
		\[
		\int_{\Omega}\abs{\nabla(\eta v)}^pw_1
		\leq C_\alpha
		\int_{\Omega}\abs{v\nabla \eta }^pw_1
		+C_{p,\alpha,w_1,w_2}
		\pt{\int_{\supp \eta} \abs{u}^{q} w_2}^{1-\frac{p}{q}}
		\pt{\int_\Omega \abs{\nabla(v\eta)}^{p} w_1}.
		\]
		
		We now select \(\eta\). Because \(u\in D^{1,p,w_1}(\Omega)\) and that \eqref{sob-ineq-w_2} holds then we know that \(u\in L^{q,w_2}(\Omega)\), therefore for any given \(\nu>0\) we can find \(R_0=R_0(\nu)>0\) such that
		\[
		\int_{\Omega\setminus B_R}\abs{u}^{q} w_2 \leq \nu,\qquad\forall\, R\geq R_0.
		\]
		With this in mind we choose \(R_0=R_0(\alpha)>0\) such that 
		\[
		C_{p,\alpha,w_1,w_2}
		\pt{\int_{\Omega\setminus B_R} \abs{u}^{q} w_2}^{1-\frac{p}{q}}\leq \frac{1}{2},
		\]
		and we suppose that \(R\geq R_0\) to obtain that if \(\supp\eta\subset B_{R}^c\) then
		\[
		\int_{\Omega}\abs{\nabla(\eta v)}^pw_1
		\leq C_\alpha
		\int_{\Omega}\abs{v\nabla \eta }^pw_1,
		\]
		and using \eqref{sob-ineq-w_1}, \eqref{sob-ineq-w_2} and passing to the limits \(l\to+\infty\), \(k\to 0^+\) give
		\begin{gather}
			\pt{\int_{\Omega}\abs{\eta u^{\alpha}}^{q_1}w_1}^{\frac1{q_1}}
			\leq C_\alpha
			\pt{\int_{\Omega}\abs{u^\alpha\nabla \eta }^pw_1}^{\frac1p},\label{l2-esti1}\\
			\pt{\int_{\Omega}\abs{\eta u^{\alpha}}^{q}w_2}^{\frac1{q}}
			\leq C_\alpha
			\pt{\int_{\Omega}\abs{u^\alpha\nabla \eta }^pw_1}^{\frac1p}\label{l2-esti2}.
		\end{gather}
		
		We now select \(\eta\): for \(n\geq 0\) we consider \(R_n=R(2-2^{-n})\) and a smooth function \(\eta\) such that \(0\leq \eta\leq 1\), \(\eta(x)=0\) for \(\abs{x}\leq R_{n}\), \(\eta(x)=1\) for \(\abs{x}\geq R_{n+1}\) and satisfies \(\abs{\nabla\eta}\leq \frac{C2^n}{R}\), 
		\begin{gather*}
			\supp\eta\subseteq \Omega\setminus B_{R_n}\\
			\supp\nabla\eta \subseteq \Omega\cap B_{R_n}\setminus B_{R_{n+1}}.
		\end{gather*}
		Therefore if for \(n\geq 1\) we take \(\alpha_n=\pt{\frac{q_1}{p}}^{n}\) in \eqref{l2-esti1} then we obtain
		\[
		\pt{\int_{\Omega\setminus B_{R_{n+1}}}\abs{u}^{\frac{q_1^{n+1}}{p^{n}}} w_1}^{\frac{p^{n}}{q_1^{n+1}}}
		\leq \pt{\frac{C_n}{R}}^{\frac{p^{n}}{q_1^{n}}}
		\pt{\int_{\Omega\setminus B_{R_n}}\abs{u}^{\frac{q_1^{n}}{p^{n-1}}} w_1}^{\frac{p^{n-1}}{q_1^{n}}},
		\]
		or equivalently, if \(s_n=\frac{q_1^{n}}{p^{n-1}}\) and \(\mathcal U_n=\norm{u}_{L^{s_n,w_1}(\Omega\setminus B_{R_{n}})}\), 
		\[
		\mathcal U_{n+1}\leq \frac{\tilde C_n}{R^{\frac{p^{n}}{q_1^n}}}\mathcal U_n,
		\]
		for \(\tilde C_n=C_n^{\pt{\frac{p}{q_1}}^{n}}\), which after iterating gives
		\[
		\mathcal U_n
		\leq \pt{\frac{\prod_{i=1}^{n-1} \tilde C_i}{R^{\sum_{i=1}^{n-1}\pt{\frac{p}{q_1}}^i}}}\mathcal U_1,
		\]
		and since
		\[
		\sum_{i=1}^{n-1}\pt{\frac{p}{q_1}}^i
		=\frac{p}{q_1-p}-\frac{q_1}{q_1-p}\pt{\frac{p}{q_1}}^{n}=\frac{p}{q_1-p}-o_n(1),
		\]
		because \(q_1>p\) we obtain that for any \(s>q_1\)
		\[
		\norm{u}_{L^{s,w_1}(\Omega\setminus B_{2R})}\leq \frac{C_s}{R^{\frac{p}{q_1-p}-o_s(1)}}\norm{u}_{L^{q_1,w_1}(\Omega\setminus B_{R})},
		\]
		because \(\mathcal U_1\leq \norm{u}_{L^{q_1,w_1}(\Omega\setminus B_R)}\), \(\mathcal U_n\geq \norm{u}_{L^{s_n,w_1}(B_{2R})}\).
		
		With the same choice of \(\eta\) and \(\alpha\) in \eqref{l2-esti2} we have
		\begin{align*}
			\pt{\int_{\Omega\setminus B_{R_{n+1}}}\abs{u}^{\frac{q_1^{n}q}{p^{n}}} w_2}^{\frac{p^{n}}{q_1^{n}q}}
			&\leq \pt{\frac{C_n}{R}}^{\frac{p^{n}}{q_1^{n}}}
			\pt{\int_{\Omega\setminus B_{R_n}}\abs{u}^{\frac{q_1^{n}}{p^{n-1}}} w_1}^{\frac{p^{n-1}}{q_1^{n}}}\\
			&=\pt{\frac{C_n}{R}}^{\frac{p^{n}}{q_1^{n}}}\mathcal U_n\\
			&\leq \pt{\frac{\prod_{i=1}^{n} \tilde C_i}{R^{\sum_{i=1}^{n}\pt{\frac{p}{q_1}}^i}}}\mathcal U_1,
		\end{align*}
		and just as before we deduce that
		\[
		\norm{u}_{L^{s,w_2}(\Omega\setminus B_{2R})}
		\leq
		\frac{C_s}{R^{\frac{p}{q_1-p}-o_s(1)}}\norm{u}_{L^{q_1,w_1}(\Omega\setminus B_{R})}
		\]
		for \(s>q\).
	\end{proof}
	
	Now we are in position to prove \cref{decay-thm}:
	
	\begin{proof}[Proof of \cref{decay-thm}]
		Consider the value of \(R_0>0\) given in \cref{lem-tau}, and suppose that \(x\in \Omega\setminus B_{2R_0}\). Fix \(0<r<\frac{R_0}{4}\) so that \(B_r(x)\subseteq\Omega\) and use \cref{ext-bd-local} to obtain
		\[
		\abs{u(x)}\leq \norm{u}_{L^{\infty}(B_r(x))}
		\leq C_r[u]_{p,B_{2r}}
		\leq C_r\spt{
			\pt{\int_{B_{2r}}\abs{u}^sw_1}^{\frac1s}
			+
			\pt{\int_{B_{2r}}\abs{u}^sw_2}^{\frac1s}
		},
		\]
		for any \(s>p\). If we consider \(R=\frac{\abs{x}}4\), then by geometric considerations we deduce that \(B_{2r}(x)\subseteq \Omega\setminus B_{2R}\) hence
		\[
		\pt{\int_{B_{2r}}\abs{u}^sw_i}^{\frac1s}\leq \pt{\int_{\Omega\setminus B_{2R}}\abs{u}^sw_i}^{\frac1s}.
		\]
		
		Now we fix \(s\) large enough so that \(o_s(1)\leq \frac{\tau}{2}\) in \cref{lema-tau2}, where \(\tau>0\) is taken from \cref{lem-tau}, by doing that we obtain
		\begin{align*}
			\norm{u}_{L^{s,w_2}(\Omega\setminus B_{2R})}+\norm{u}_{L^{s,w_1}(\Omega\setminus B_{2R})}
			&\leq
			\frac{C}{R^{\frac{p}{q_1-p}-o_s(1)}}\norm{u}_{L^{q_1,w_1}(\Omega\setminus B_{R})}\\
			&\leq
			\frac{C}{R^{\frac{p}{q_1-p}-\frac{\tau}2}}\norm{u}_{L^{q_1,w_1}(\Omega\setminus B_{R})}\\
			&\leq
			\frac{C}{R^{\frac{p}{q_1-p}-\frac{\tau}{2}}}\pt{\frac{R_0}{R}}^{\tau}\norm{u}_{L^{q_1,w_1}(\Omega\setminus B_{R_0})},
		\end{align*}
		therefore, by putting all together we obtain
		\[
		\abs{u(x)}\leq \frac{CR_0^{\tau}}{R^{\frac{p}{q_1-p}+\frac{\tau}{2}}}\norm{u}_{L^{q_1,w}(\Omega\setminus B_{R_0})}=\frac{C}{\abs{x}^{\frac{p}{q_1-p}+\lambda}},
		\]
		for some constant \(C>0\) independent of \(\abs{x}\geq 2R_0\), and the result is proved for \(\tilde R=2R_0\).
	\end{proof}

\begin{bibdiv}
	\begin{biblist}
		
		\bib{Bjorn2001}{article}{
			author={Bj\"{o}rn, Jana},
			title={Poincar\'{e} inequalities for powers and products of admissible
				weights},
			date={2001},
			ISSN={1239-629X},
			journal={Ann. Acad. Sci. Fenn. Math.},
			volume={26},
			number={1},
			pages={175\ndash 188},
			url={http://eudml.org/doc/122274},
			review={\MR{1816566}},
		}
		
		\bib{CR-O2013-2}{article}{
			author={Cabr{\'e}, Xavier},
			author={Ros-Oton, Xavier},
			title={Sobolev and isoperimetric inequalities with monomial weights},
			date={2013},
			ISSN={0022-0396},
			journal={J. Differential Equations},
			volume={255},
			number={11},
			pages={4312\ndash 4336},
			url={http://dx.doi.org/10.1016/j.jde.2013.08.010},
			review={\MR{3097258}},
		}
		
		\bib{Cas2016-2}{article}{
			author={Castro, Hern\'an},
			title={Hardy-{S}obolev-type inequalities with monomial weights},
			date={2017},
			ISSN={0373-3114},
			journal={Ann. Mat. Pura Appl. (4)},
			volume={196},
			number={2},
			pages={579\ndash 598},
			url={http://dx.doi.org/10.1007/s10231-016-0587-2},
			review={\MR{3624966}},
		}
		
		\bib{Cas2021}{article}{
			author={Castro, Hern\'{a}n},
			title={Extremals for {H}ardy-{S}obolev type inequalities with monomial
				weights},
			date={2021},
			ISSN={0022-247X},
			journal={J. Math. Anal. Appl.},
			volume={494},
			number={2},
			pages={124645, 31},
			url={https://doi.org/10.1016/j.jmaa.2020.124645},
			review={\MR{4158747}},
		}
		
		\bib{Cas2023}{arxiv}{
			author={Castro, Hernán},
			title={Interior regularity of some weighted quasi-linear equations},
			date={2024},
			url={https://arxiv.org/abs/2412.07866},
		}
		
		\bib{FraGutWhe1994}{article}{
			author={Franchi, Bruno},
			author={Guti\'errez, Cristian~E.},
			author={Wheeden, Richard~L.},
			title={Weighted {S}obolev-{P}oincar\'e{} inequalities for {G}rushin type
				operators},
			date={1994},
			ISSN={0360-5302,1532-4133},
			journal={Comm. Partial Differential Equations},
			volume={19},
			number={3-4},
			pages={523\ndash 604},
			url={https://doi.org/10.1080/03605309408821025},
			review={\MR{1265808}},
		}
		
		\bib{HeKiMa2006}{book}{
			author={Heinonen, Juha},
			author={Kilpel\"{a}inen, Tero},
			author={Martio, Olli},
			title={Nonlinear potential theory of degenerate elliptic equations},
			publisher={Dover Publications, Inc., Mineola, NY},
			date={2006},
			ISBN={0-486-45050-3},
			note={Unabridged republication of the 1993 original},
			review={\MR{2305115}},
		}
		
		\bib{Serrin1964}{article}{
			author={Serrin, James},
			title={Local behavior of solutions of quasi-linear equations},
			date={1964},
			ISSN={0001-5962},
			journal={Acta Math.},
			volume={111},
			pages={247\ndash 302},
			url={https://doi.org/10.1007/BF02391014},
			review={\MR{170096}},
		}
		
	\end{biblist}
\end{bibdiv}

\end{document}